\def\phi{\varphi}
\def\eps{\varepsilon}
\def\d{\partial}
\def\a{\alpha}
\def\b{\beta}
\def\g{\gamma}
\def\l{\lambda}
\def\Z{{\mathbb Z}}
\def\e{\epsilon}
\def\sp{{\rm span\,}}
\def\Aut{{\sf Aut\,}}
\def\Hom{{\sf Hom\,}}
\def\1#1{\overline{#1}}
\def\2#1{\widetilde{#1}}
\def\3#1{\widehat{#1}}
\def\4#1{\mathbb{#1}}
\def\5#1{\mathfrak{#1}}
\def\6#1{{\mathcal{#1}}}
\def\C{{\4C}}
\def\Z{{\4Z}}
\def\T{{\Theta}}
\newtheorem{theorem}{Theorem}[section]
\newtheorem{corollary}[theorem]{Corollary}
\newtheorem{definition}[theorem]{Definition}
\newtheorem{lemma}[theorem]{Lemma}
\newtheorem{proposition}[theorem]{Proposition}
\theoremstyle{remark}
\newtheorem{example}[theorem]{Example}
\begin{document}
\numberwithin{equation}{section}

\def\bl{\begin{Lem}}
\def\el{\end{Lem}}
\def\bp{\begin{Pro}}
\def\ep{\end{Pro}}
\def\bt{\begin{Thm}}
\def\et{\end{Thm}}
\def\bc{\begin{Cor}}
\def\ec{\end{Cor}}
\def\bd{\begin{Def}}
\def\ed{\end{Def}}
\def\br{\begin{Rem}}
\def\er{\end{Rem}}
\def\be{\begin{example}}
\def\ee{\end{example}}
\def\bpf{\begin{proof}}
\def\epf{\end{proof}}
\def\ben{\begin{enumerate}}
\def\een{\end{enumerate}}
\def\beq{\begin{equation}}
\def\eeq{\end{equation}}

\def\Label#1{\label{#1}}

\title[Rigidity of CR maps ]{Rigidity of proper holomorphic maps between bounded symmetric
domains}

\author[S.-Y. Kim \& D. Zaitsev]{Sung-Yeon Kim* and Dmitri Zaitsev**}
\address{S.-Y. Kim: Department of Mathematics Education, Kangwon National University, 123 Hyoja-dong, Chuncheon, Kangwon-do, 200-701, Korea }
\email{sykim87@kangwon.ac.kr}
\address{D. Zaitsev: School of Mathematics, Trinity College Dublin, Dublin 2, Ireland}
\email{zaitsev@maths.tcd.ie}

%\thanks{2000 {\it Mathematics Subject Classification}. primary;32V40, secondary;32M05}
\thanks{*This research was supported by Basic Science Research Program through the National Research Foundation of Korea (NRF) funded by the Ministry of Education, Science and Technology (grant number 2009-0067947)}
\thanks{**Supported in part by the Science Foundation Ireland grant 10/RFP/MTH2878.}

\keywords{Bounded symmetric domains, proper holomorphic maps, CR embedding, complete system, adapted frames, totally geodesic embedding}
%\dedicatory{}
%\thanks{This paper is in final form and no version of it will be submitted
%for publication elsewhere.}
\subjclass[2000]{%Primary
32V40, 32V30, 32V20, 32M05, 53B25, 35N10}

\begin{abstract}
Our first main result gives assumptions guaranteeing that proper holomorphic maps between Cartan type I bounded symmetric domains have simple block matrix shape, answering positively a question of Mok.
The proof is based on the second main result establishing similar phenomenon for local CR maps between
arbitrary boundary components of two bounded symmetric domains of the above type. Since boundary components other than Shilov boundaries are Levi-degenerate, our analysis is based on their $2$-nondegeneracy
combining Levi forms with higher order tensors.
\end{abstract}

\maketitle

\section{Introduction}
The goal of this paper is to prove new rigidity results
for proper holomorphic maps between 
bounded symmetric domains.
In fact, we obtain our results for 
maps only defined locally near a boundary point
and sending open pieces of boundaries into each other.
Furthermore, we also provide a pure CR version
of our result for CR maps between boundary components
of bounded symmetric domains.
%In particular, we answer a question of Mok in our situation.

Since the work of Bochner \cite{Bo47} and Calabi \cite{Ca53},
rigidity properties of holomorphic isometries between bounded symmetric domains
attracted considerable attention.
The reader is referred to the survey by Mok \cite{M11} for extensive discussion.
See also the work of  Siu \cite{S80, S81}
for other important rigidity phenomena for bounded symmetric domains,
such as the strong rigidity of complex structures of their compact quotients.

Remarkably, many rigidity properties survive when the isometry condition 
is replaced by purely topological conditions such as {\em properness}.
(Recall that a map between topological spaces is called {\em proper}
if its preimages of compact subsets are compact.)
The work on rigidity of {\em proper holomorphic maps} goes back to 
the work of Poincar\'e \cite{P07} and later Alexander \cite{A74}
for maps between balls of equal dimension, or more generally, 
one-sided neighborhoods of their boundary points.
However, by intriguing contrast, proper holomorphic maps between balls of 
{\em different dimensions} lack similar rigidity properties,
see the work of  Hakim-Sibony \cite{HS}, L\o w \cite{Lo}, Forstneri\v c \cite{F86a}, Globevnik~\cite{G}, Stens\o nes~\cite{St}.
On the other hand, rigidity can be regained by strengthening properness
by requiring additional boundary regularity, see the work of Webster \cite{W79}, 
Faran \cite{Fa86}, Cima-Suffridge \cite{CS83,CS90}, Forstneri\v c \cite{F86b, F89},
Huang \cite{H99,H03}, Huang-Ji \cite{HJ01}, Huang-Ji-Xu \cite{HJX06}, Ebenfelt-Minor \cite{EM12} and Ebenfelt~\cite{E13}.
In another direction, further rigidity phonemena for CR
maps between real hypersurfaces and  {\em hyperquadrics} have been discovered by
Ebenfelt, Huang and the second author \cite{EHZ04,EHZ05},
Baouendi-Huang \cite{BH05}, Baouendi-Ebenfelt-Huang \cite{BEH08,BEH09},
Ebenfelt-Shroff \cite{ES10} and Ng \cite{Ng13c}.

In contrast to holomorphic maps between balls (or CR maps between hypersurfaces),
rigidity properties for maps between 
{\em bounded symmetric domains $D$ and $D'$ of higher rank}
are much less understood.
If the rank $r'$ of $D'$ does not exceed the rank $r$ of $D$
and both ranks $r,r'\ge2$, 
the rigidity of {\em proper holomorphic maps} $f\colon D\to D'$
was conjectured by Mok \cite{M89} and proved by Tsai \cite{T93},
showing that $f$ is necessarily totally geodesic (with respect to the Bergmann metric).
In the remaining case $r<r'$, very little seems to be known,
see the work of Tu \cite{Tu02a,Tu02b}, Mok \cite{M08}
and more recently Mok-Ng-Tu \cite{MNT10},  Mok-Ng \cite{MN12},  Ng \cite{Ng12, Ng13a, Ng13b}.
%See also Mok-Ng-Tu \cite{MNT10}

In \cite{KZ}, the authors established rigidity for local CR embeddings 
between Shilov boundaries of Cartan type I bounded symmetric domains $D_{p,q}$ and $D_{p',q'}$ of any rank under the assumption 
\begin{equation}\Label{pq-assumption}
p'-q'<2(p-q)
\end{equation}
(corresponding to the known assumption $n'<2n$ for 
maps between balls in $\C^{n+1}$ and $\C^{n'+1}$ or their boundaries.)
Recall that the Cartan type I bounded symmetric domain $D_{p,q}$ is the set of $p\times q$ matrices $z$ over $\C$ such that 
$I_q-z^*z$ is positive definite,
where $I_q$ is the identity $q\times q$ matrix and $z^*=\bar z^t$.
In \cite{KZ}, examples were also given of maps of ``Whitney type'' showing that \eqref{pq-assumption} cannot be dropped.
However, even though these examples are (polynomial) CR maps between Shilov boundaries,
and map $D_{p,q}$ into $D_{p',q'}$, they in general do not induce {\em proper} maps between  these domains,
unless the rank $r=q=1$. 
Nevertheless, also for proper holomorphic maps between bounded symmetric domains of Cartan type I,
rigidity is known to fail (see e.g.\ \cite{T93}) due to the presence of maps of the block matrix form
\begin{equation}\Label{split-form}
f\colon D_{p,q}\to D_{p',q'}, \quad
z\mapsto
\begin{pmatrix}
 z & 0 \\
 0 & h(z)
\end{pmatrix},
\end{equation}
where $h(z)$ is arbitrary holomorphic matrix-valued function satisfying
\begin{equation}\Label{h-ineq}
I_{q'-q}-h(z)^* h(z) \text{ is positive definite}, \quad z\in D_{p,q}.
\end{equation}
In view of this fact, N. Mok asked the following question:

\bigskip

``{\em Are proper holomorphic maps between bounded symmetric domains of higher rank, after composing with suitable automorphisms of the domains, always of the form \eqref{split-form}}?''

\bigskip

In this paper we consider a situation where we can answer this question affirmatively. 
In fact, we replace proper maps by more general locally defined ones but have to assume
some boundary regularity. Recall (see e.g.\ \cite{KZ03}) that the boundary $\d D_{p,q}$ is a union of $q$ smooth submanifolds (boundary components). We call $x\in\d D_{p,q}$ is a {\em smooth boundary point},
if $\d D_{p,q}$ is a smooth hypersurface in a neighborhood of $x$.
As our first main result we prove:

\begin{theorem}\Label{cor0}
Let $U\subset\C^{p\times q}$ $(p\geq q>1)$ be an open neighborhood of a
smooth boundary point $x\in \d D_{p,q}$ 
and 
 $f\colon U\cap \1{D_{p,q}} \to \1{D_{p',q'}}$ be a smooth map,
 holomorphic in $U\cap D_{p,q}$
 with $f(U\cap \d D_{p,q})\subset \d D_{p',q'}$
 but $f(U\cap D_{p,q})\not\subset \d D_{p',q'}$.
Assume that
\begin{equation}\Label{cor-ineq}
p'<2p-1,\quad q'<p.
\end{equation}
Then $p'\geq p$, $q'\geq q$ and after composing with suitable automorphisms of $D_{p,q}$ and $D_{p',q'}$,
$f$ takes the block matrix form \eqref{split-form}
with $h$ satisfying \eqref{h-ineq}.
\end{theorem}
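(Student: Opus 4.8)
The plan is to reduce Theorem~\ref{cor0} to our second main result — the local rigidity statement asserting that a CR transversal CR map between (open pieces of) boundary components of two Cartan type~I domains is, after composition with automorphisms, a standard totally geodesic block embedding — and then to promote the resulting local normal form to the global block shape \eqref{split-form}.

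First I would note that a smooth boundary point $x\in\d D_{p,q}$ necessarily lies on the top boundary component $\d_1 D_{p,q}$, i.e.\ the stratum on which $I_q-z^*z$ has a one-dimensional kernel, since this is the only boundary stratum that is a smooth real hypersurface. Because $f$ is smooth up to the boundary and holomorphic inside, $f|_{U\cap\d_1 D_{p,q}}$ is a CR map; after replacing $x$ by a nearby smooth boundary point and shrinking $U$, we may assume $f(U\cap\d_1 D_{p,q})$ lies in a single component $\d_{k'}D_{p',q'}$, while the hypothesis $f(U\cap D_{p,q})\not\subset\d D_{p',q'}$ forces $f$ to be CR transversal there. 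Next, using an automorphism of $D_{p,q}$ (these act transitively on $\d_1 D_{p,q}$, with isotropy at $x$), I would bring $\d_1 D_{p,q}$ near $x$ into normal form: writing $z=(z',w)$ with $z'\in\C^{p\times(q-1)}$ the first $q-1$ columns and $w\in\C^{p}$ the last column, the hypersurface becomes
\[ \langle A(z',\overline{z'})\,w,\,w\rangle = 1,\qquad A(z',\overline{z'}) = I_p + z'\bigl(I_{q-1}-(z')^*z'\bigr)^{-1}(z')^*, \]
a ``variable hyperquadric'' with $A$ positive Hermitian and depending real-analytically on $z'$. Thus each slice $\{z'=\mathrm{const}\}$ is a sphere $S^{2p-1}$, and it is exactly the genuine $z'$-dependence of $A$ that makes $\d_1 D_{p,q}$ Levi-degenerate but $2$-nondegenerate once $q>1$ (for $q=1$ there is no parameter $z'$ and one recovers the sphere). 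The target component $\d_{k'}D_{p',q'}$ admits an analogous normal form.

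Now I would run the CR analysis. The Levi-form level, restricting $f$ to a slice, gives a CR map of $S^{2p-1}$ into the target, and the hypothesis $p'<2p-1$ places us in Huang's rigidity range for proper maps between balls \cite{H99}, forcing each slice restriction to be — up to unitary matrices — the linear embedding $w\mapsto(w,0,\dots,0)$ into a slice of the target. Differentiating this family of rigid embeddings in the parameter $z'$ and imposing the next-order compatibility of $f$ with the two normal forms — the step that combines the Levi form with the higher-order tensor and requires adapted frames — produces a complete system of equations for the remaining components of $f$; here the second hypothesis $q'<p$ is what rules out extra freedom in the parameter directions, forces the ``base part'' of $f$ to be an embedding of the same rigid type, and yields $k'=1$, $p'\ge p$ and $q'\ge q$. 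Integrating the complete system and using connectedness to kill the local indeterminacy shows that, after composition with automorphisms of $D_{p,q}$ and $D_{p',q'}$, the map has the totally geodesic block shape
\[ z\ \longmapsto\ \begin{pmatrix} z & 0\\ 0 & h(z)\end{pmatrix}, \]
which is \eqref{split-form}; the positivity \eqref{h-ineq} is just a restatement of $f(D_{p,q})\subset D_{p',q'}$, and the map on all of $D_{p,q}$ is recovered by analytic continuation of this local form.

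The main obstacle is the CR-geometric core contained in the second main result: constructing frames on the $2$-nondegenerate components $\d_1 D_{p,q}$ adapted simultaneously to the Levi form and to the third-order tensor, proving that a CR transversal map satisfies a genuinely complete (holonomic) system, and tracking the two numerical conditions \eqref{cor-ineq} through both the slicewise sphere rigidity (which consumes $p'<2p-1$) and the propagation in the parameter directions (which consumes $q'<p$). By comparison, identifying the smooth boundary points with $\d_1 D_{p,q}$, reducing to the CR statement, and extracting the global block form \eqref{split-form} together with $p'\ge p$, $q'\ge q$ from the local conclusion are comparatively routine.
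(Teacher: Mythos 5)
Your reduction is essentially the paper's proof of Theorem~\ref{cor0}: restrict $f$ to the smooth hypersurface stratum $S_{p,q,1}$, observe that $f(U\cap D_{p,q})\not\subset\partial D_{p',q'}$ forces CR transversality, note that \eqref{cor-ineq} together with $r=1\le r'$ yields \eqref{main-ineq}, invoke Theorem~\ref{main}, and use the interior condition once more to kill the $I_{r'-1}$ block. The only real divergence is inside the black box — your slicing-by-spheres sketch of Theorem~\ref{main} (variable hyperquadric normal form plus Huang's ball rigidity on slices) is not how the paper proves it, which instead runs a moving-frames/Cartan-lemma analysis on adapted Grassmannian frames — but since you quote the second main result rather than prove it this is immaterial here; just be aware that the transversality claim you assert in one line is established in the paper by an actual argument (the Levi-form identity plus integrability of the Levi kernel, whose leaves are complex submanifolds).
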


Note that the case $q=q'=1$ corresponds to both $D_{p,q}$ and $D_{p',q'}$ being unit balls,
where the same conclusion (also under weaker regularity) is due to Huang~\cite{H99}.
In this case, the first inequality \eqref{cor-ineq} is sharp whereas the second is automatically satisfied.
As immediate application of Theorem~\ref{cor0} for proper holomorphic maps, we obtain:

\begin{corollary}\Label{cor}
Let $f\colon D_{p,q} \to D_{p',q'}$ $(p\ge q>1)$ be a proper holomorphic map which extends smoothly to a neighborhood of a smooth boundary point.
Then assuming \eqref{cor-ineq} we obtain the conclusion of 
Theorem~{\rm\ref{cor0}}.
\end{corollary}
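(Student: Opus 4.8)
The plan is to derive Corollary~\ref{cor} as an immediate consequence of Theorem~\ref{cor0} by checking that a proper holomorphic map which extends smoothly to a neighborhood of a smooth boundary point automatically satisfies all the hypotheses of the theorem. First I would recall the basic properness dictionary: if $f\colon D_{p,q}\to D_{p',q'}$ is proper and holomorphic and extends smoothly to a neighborhood $U$ of a smooth boundary point $x\in\d D_{p,q}$, then the smooth extension, which we again denote by $f$, maps $U\cap\1{D_{p,q}}$ into $\1{D_{p',q'}}$; this is just continuity together with $f(D_{p,q})\subset D_{p',q'}$. The content of properness is that $f$ pushes the boundary into the boundary: $f(\d D_{p,q})\subset\d D_{p',q'}$. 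Indeed, if some boundary point were sent into the interior, then by the open mapping principle (applied to the holomorphic extension on the ambient neighborhood, or more carefully using that $f$ is holomorphic on $D_{p,q}$ and continuous up to the boundary) one produces a sequence in $D_{p,q}$ converging to the boundary whose image stays in a compact subset of $D_{p',q'}$, contradicting properness. This gives $f(U\cap\d D_{p,q})\subset\d D_{p',q'}$.

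Next I would verify the remaining non-degeneracy hypothesis of Theorem~\ref{cor0}, namely $f(U\cap D_{p,q})\not\subset\d D_{p',q'}$. This is automatic: $f(U\cap D_{p,q})\subset f(D_{p,q})\subset D_{p',q'}$ is contained in the \emph{open} domain, which is disjoint from $\d D_{p',q'}$, so certainly $f(U\cap D_{p,q})$ is not contained in $\d D_{p',q'}$ (here one uses that $U\cap D_{p,q}$ is nonempty, which holds since $x$ is a boundary point of $D_{p,q}$ and $U$ is an open neighborhood of it). With both of these facts in hand, together with the standing assumption \eqref{cor-ineq} that is carried over verbatim into the corollary, the map $f\colon U\cap\1{D_{p,q}}\to\1{D_{p',q'}}$ satisfies every hypothesis of Theorem~\ref{cor0}.

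Applying Theorem~\ref{cor0} then yields directly that $p'\ge p$, $q'\ge q$, and that after composing with automorphisms of $D_{p,q}$ and $D_{p',q'}$ the map $f$ takes the block form \eqref{split-form} with $h$ satisfying \eqref{h-ineq}, which is exactly the conclusion of Theorem~\ref{cor0} that the corollary asserts. The only genuinely nontrivial point in this deduction is the implication that properness forces $f(\d D_{p,q})\subset\d D_{p',q'}$; this is standard for proper holomorphic maps between bounded domains with the necessary boundary regularity, but I would state it carefully, noting that the smooth extension to a neighborhood of $x$ is precisely what licenses evaluating $f$ on $U\cap\d D_{p,q}$ and ensures the boundary-to-boundary property near $x$. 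Everything else is a matter of unwinding definitions, so I do not expect any real obstacle here — the substance of the result lives entirely in Theorem~\ref{cor0}.
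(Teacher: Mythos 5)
Your proposal is correct and matches the paper's treatment: the paper presents the corollary as an immediate application of Theorem~\ref{cor0}, with the only content being exactly the two points you verify — that properness plus the continuous boundary extension forces $f(U\cap \d D_{p,q})\subset \d D_{p',q'}$, and that $f(U\cap D_{p,q})\subset D_{p',q'}$ rules out the degenerate alternative. Both verifications are carried out correctly, so there is nothing to add.
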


The main difference from the situation of \cite{KZ} here
is that a proper holomorphic map, even if smoothly extendible to the boundary
(and hence sending boundaries into each other),
need not send Shilov boundaries into each other,
unless the source domain is of rank $1$ (i.e.\ the ball).
In higher rank case considered here,
boundary extensions of proper holomorphic maps 
will send boundary components of the source domain
into some of those of the target.
Thus in order to establish Theorem~\ref{cor0},
we need to analyze CR maps between general boundary components of $D_{p,q}$ and $D_{p',q'}$.
For $p\ge q\geq r\geq 1$, we denote by $S_{p,q,r}$ 
the {\em boundary component of rank $r$},
i.e.\ the set of all matrices $z\in\d D_{p,q}$
for which the matrix $I_{q}-z^{*}z$ has rank $r$.
We also write $T=TS_{p,q,r}$, $T^{c}=T^{c}S_{p,q,r}$,
for the tangent and complex tangent spaces and add $'$ to those for $S_{p',q',r'}$.
As our second main result, we prove:

\begin{theorem}\Label{main}
Let $f$ be a smooth CR map between open pieces of boundary components $S_{p,q,r}$ and $S_{p',q',r'}$ of rank $r<q$ and $r'$ respectively of bounded symmetric domains $D_{p,q}$ and $D_{p',q'}$,
 such that $df(\xi)\in T'\setminus {T'} ^c$ for any tangent vector $\xi\in T\setminus T^c$.
Assume that
\begin{equation}\Label{main-ineq}
p'-r'<2(p-r),\quad q'-r'<p-r.
\end{equation}
Then $r\leq  r'$ and after composing with suitable automorphisms of $D_{p,q}$ and $D_{p',q'}$,
$f$ takes the block matrix form
\begin{equation}\Label{f}
f(z)=
\begin{pmatrix}
 z  & 0 &0\\
      0 &  I_{r'-r} & 0\\
 0 & 0 & h(z)
\end{pmatrix},
\end{equation}
where $h\colon S_{p,q,r}\to\C^{[(q'-r')-(q-r)] \times [(p'-r')-(p-r)]}$
is a CR map satisfyng
\begin{equation}\Label{h-cond}
Id- h(z)^* h(z)>0.
\end{equation}
Vice versa, for any CR map $h$ satisfying \eqref{h-cond},
$f$ given by \eqref{f} defines a CR map between open pieces of 
$S_{p,q,r}$ and $S_{p',q',r'}$.
\end{theorem}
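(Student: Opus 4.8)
\medskip
\noindent\emph{Sketch of proof.}
The converse assertion is a direct computation: for $f$ of the block form \eqref{f} with $Id-h^{*}h>0$ one has
\[
I_{q'}-f(z)^{*}f(z)=\operatorname{diag}\bigl(I_{q}-z^{*}z,\ 0,\ Id-h(z)^{*}h(z)\bigr),
\]
which is positive semidefinite of rank $r'$ for $z\in S_{p,q,r}$ (using that $I_{q}-z^{*}z$ has rank $r$ there and $Id-h^{*}h$ has full rank); hence $f(S_{p,q,r})\subset S_{p',q',r'}$, the map $f$ is CR since it is holomorphic in $z$ and CR through $h$, and $df$ sends $T\setminus T^{c}$ into $T'\setminus(T')^{c}$ by inspection of the defining equations. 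For the forward direction the plan is to first reduce to local models: after composing with automorphisms of $D_{p,q}$ and $D_{p',q'}$ we may take the reference points to be the standard rank-$r$ and rank-$r'$ boundary points, and solving the condition that $I_{q}-z^{*}z\ge0$ has rank $r$ near such a point by a Schur complement identifies a neighbourhood in $S_{p,q,r}$ with an explicit homogeneous model $M_{p,q,r}$ in coordinates $(Z,\zeta,W)$, where $W$ is the Hermitian ``codimension'' block and the defining relations take the form
\[
W+W^{*}=\langle Z,Z\rangle+\mathcal C(Z,\zeta,\overline{Z},\overline{\zeta})+\cdots,
\]
with $\langle\cdot,\cdot\rangle$ the Levi form, nondegenerate in the $Z$-directions and vanishing in the $\zeta$-directions, and the cubic coupling $\mathcal C$ (linear in $\overline{\zeta}$) nondegenerate in the $\zeta$-directions. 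Thus $M_{p,q,r}$ is Levi-degenerate with Levi kernel exactly the $\zeta$-directions but $2$-nondegenerate, and likewise for $M_{p',q',r'}$; here the case $r=0$ is Levi-nondegenerate and the genuinely new situation is $1\le r\le q-1$.

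Next I would put $f$ in normal form in these models. The hypothesis $df(T\setminus T^{c})\subset T'\setminus(T')^{c}$ says precisely that the induced map $T/T^{c}\to T'/(T')^{c}$ is injective, so $(q-r)^{2}\le(q'-r')^{2}$; moreover a CR map carries the Levi kernel into the Levi kernel, and transversality forces this restriction to be injective and compatible with the $2$-nondegeneracy tensor, which is what will eventually yield $r\le r'$. Composing $f$ with elements of the (large) isotropy subgroups of $M_{p,q,r}$ and $M_{p',q',r'}$ at the base points, I would bring the $1$-jet of $f$ to a form respecting the splitting, so that $f=(\widetilde Z,\widetilde\zeta,\widetilde W)$ with $\widetilde\zeta$ depending only on $\zeta$, $\widetilde Z$ only on $Z$, and $\widetilde W$ only on $W$, to leading order. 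From $f(M_{p,q,r})\subset M_{p',q',r'}$ one then reads off the basic functional equations relating these components and their derivatives.

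The heart of the argument --- and the step I expect to be the main obstacle --- is to close up this system. Differentiating the basic equations along CR and conjugate-CR vector fields and invoking $2$-nondegeneracy, one shows the prolonged system closes after a universal finite number of steps, so that a finite jet of $f$ at the base point determines $f$; it then remains to identify the resulting finite-dimensional family of solutions with the one generated by automorphisms together with the block maps \eqref{f}. For this I would run a matrix-valued Huang-type positivity argument on the differentiated Levi-level identity, which reads schematically $\widetilde Z^{\,*}\widetilde Z+(\text{null contributions})\equiv Z^{*}Z$ modulo controlled error: the first inequality $p'-r'<2(p-r)$ is exactly the size bound on the Hermitian matrices involved under which such an identity forces the second fundamental form of $f$ to vanish, i.e.\ $\widetilde Z$ to be linear, while the second inequality $q'-r'<p-r$ then rules out the remaining (off-block) freedom in $f$. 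Feeding this back into the functional equations shows that $r\le r'$, $p'\ge p$, $q'\ge q$ and that $f$, after the automorphism normalization, is the block map \eqref{f}, the inequality $Id-h(z)^{*}h(z)>0$ for the surviving block being inherited from $f(M_{p,q,r})\subset M_{p',q',r'}$. The difficulty is that, since the Levi form does not detect the $\zeta$-directions, one must combine it with the cubic tensor $\mathcal C$ to obtain enough independent differentiated equations, and then carry out the positivity argument with Hermitian \emph{semidefinite} forms in the presence of the coupled $\zeta$-directions --- a substantial generalization of the Hermitian-matrix Huang lemma of \cite{KZ} beyond the Levi-nondegenerate Shilov case.
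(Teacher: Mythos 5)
Your converse computation is essentially the right one, but check the rank bookkeeping: on $S_{p,q,r}$ the matrix $I_q-z^*z$ has an $r$-dimensional kernel (equivalently rank $q-r$), so the diagonal matrix in your display has rank $(q-r)+\bigl((q'-r')-(q-r)\bigr)=q'-r'$, i.e.\ kernel of dimension $r'$, which is the correct membership condition for $S_{p',q',r'}$; with your stated reading ``$I_q-z^*z$ has rank $r$'' the count does not produce rank $r'$. For the forward direction, what you have written is a plan rather than a proof, and the steps you defer are exactly where the content lies. First, your dimension count for $r\le r'$ uses the wrong invariant: transversality does make the induced map $T/T^c\to T'/{T'}^c$ injective, but the CR codimension of $S_{p,q,r}$ is $r^2$ (the real dimension of the space of Hermitian forms on the $r$-dimensional null space $V_0$), not $(q-r)^2$; the correct count gives $r^2\le r'^2$, hence $r\le r'$. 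Second, the normalization ``$\widetilde\zeta$ depends only on $\zeta$, $\widetilde Z$ only on $Z$, $\widetilde W$ only on $W$ to leading order'' is not delivered by the isotropy groups; it is equivalent to the vanishing of a long list of second fundamental form components, and establishing that vanishing is where both inequalities \eqref{main-ineq} are consumed in quite specific ways (the paper uses $q'-r'<p-r$ to force linear dependence of families of vectors in $\C^{q'-r'}$ indexed by $j=1,\dots,p-r$, and $p'-r'<2(p-r)$ both in a Huang-type orthogonality argument and through Lemma~5.3 of \cite{EHZ05}). You name the right inequalities but never exhibit the identities on which the positivity argument is to run, and in this Levi-degenerate setting those identities couple the Levi form with the second-order tensor in a way that must be organized \emph{before} any Huang lemma can be applied.

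Third, and most seriously, even after the second fundamental form vanishes you still need (i) an integration step showing that the image of $f$ sits in a fixed direct sum $Gr(V_0,q)\oplus V_1\oplus Gr(V_2,\cdot)$ --- in the paper this is done by exhibiting a complete first-order linear system $dA=\Pi A$ for the frame matrix and invoking uniqueness of solutions --- and (ii) a Liouville-type extension theorem asserting that the resulting \emph{local} CR diffeomorphism of the Levi-degenerate orbit $S_{p,q,r}$ is the restriction of a \emph{global} automorphism of $D_{p,q}$. Point (ii) is not a formal consequence of finite jet determination: the paper imports it from Kaup--Zaitsev (\cite{KZ06}, Theorem~4.5, and \cite{KZ00}, Theorem~8.5), and without some such input your final ``identification of the finite-dimensional family of solutions with the one generated by automorphisms and the block maps \eqref{f}'' is an unsupported assertion. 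In short, the defining-equation/jet-prolongation route is a legitimate alternative in principle to the paper's moving-frames computation, but as submitted the argument has genuine gaps at the normalization step, at the closing of the prolonged system, and at the final identification with automorphisms, and the one quantitative claim you do make, $(q-r)^2\le(q'-r')^2$, is not the inequality that yields $r\le r'$.
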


Comparing to Shilov boundaries $S_{p,q}=S_{p,q,q}$ considered in \cite{KZ},
the lower rank boundary components $S_{p,q,r}$, $r<q$, present the new substantial difficulty by being 
{\em Levi-degenerate}. As a result, similar technique does not lead to desired rigidity. 
In order to overcome this difficulty, we have to employ the higher order nondegeneracy ($2$-nondegeneracy)
involving components of different degree, which requires a different approach.

The proofs of Theorems~\ref{cor0} and \ref{main} are completed in \S\ref{proofs}.

%\begin{theorem}\Label{proper}
%Let $f$ be a smooth CR map between open pieces of $S_{p,q,r}$ and $S_{p',q',r'}$ such that $f(D_{p,q})\subset D_{p',q'}$ and $f(S_{p,q,s})\not\subset S_{p',q',r'}$ for any $s<r$.
%Assume that
%\begin{equation}\Label{main-ineq}
%p'-r'<2(p-r),\quad q'-r'<p-r.
%\end{equation}
%Then $r= r'$ and after composing with suitable automorphisms of $D_{p,q}$ and $D_{p',q'}$,
%$f$ is given by the block matrix
%$$z\mapsto
%\begin{pmatrix}
% z & 0 \\
% 0 & h(z)
%\end{pmatrix},
%$$
%where $h$ is a CR map into  $\C^{q'-q}\times\C^{p'-p}$ such that
%$$Id-h^* h>0.$$
%\end{theorem}

%\section{$q=1$}

\section{Geometry of boundary components}
We shall consider the standard inclusion $D_{p,q}\subset\C^{p\times q}\subset Gr(q,p+q)$,
where $Gr(q,p+q)$ is the Grassmanian of all $q$-dimensional subspaces ($q$-planes) of $\C^{p+q}$.
Here the matrix $z\in \C^{p\times q}$ is identified with the graph in $\C^{p+q}$ of the linear map defined by $z$.
We equip the space $\C^{p+q}$ with the
nondegenerate hermitian form
\begin{equation}\Label{form}
\langle z,w\rangle =  \sum_{j} \eps_{j}z_{j}\bar w_{j},
\quad \eps_{j}=
\begin{cases}
-1, & j=1,\ldots,q,\\
1, & j=q+1,\ldots,q+p,
\end{cases} 
\end{equation}
called the {\em basic form}.

In this identification, $D_{p,q}$ is represented by 
all $q$-planes $V\subset\C^{p+q}$
such that the restriction $\langle\cdot,\cdot\rangle|_{V}$
is negative definite,
and the boundary component $S_{p,q,r}\subset \d D_{p,q}$ of rank $r$
by all $q$-planes $V\subset\C^{p+q}$
such that restriction $\langle\cdot,\cdot\rangle|_{V}$
has $q-r$ negative and $r$ zero eigenvalues.
For $V\in S_{p,q,r}$, denote by $V_0\subset V$ the $r$-dimensional kernel
of $\langle\cdot,\cdot\rangle|_{V}$.
The connected identity component $G$ of the biholomorphic automorphism group $\Aut(D_{p,q})$ is now identified with
the group of all linear transformations of $\C^{p+q}$ preserving $\langle\cdot,\cdot\rangle$,
and each $S_{p,q,r}$ is a $G$-orbit.
In this section we will construct a frame bundle over $S_{p,q,r}$
associated with the CR structure of $S_{p,q,r}$ using Grassmannian frames of $Gr(q,p+q)$.

\subsection{Adapted frames}\Label{adapted}
An adapted $S_{p,q,r}$-frame is a set of vectors
$$Z_{1},\ldots, Z_{r}, Z'_1,\ldots, Z'_{q-r}, X_{1},\ldots, X_{n}, Y_{1},\ldots, Y_{r},$$
where
$$n:=p-r,$$
for which the basic form
is given by the matrix
$$
\begin{pmatrix}
0&0& 0& I_{r}\\
0&-I_{q-r}&0&0\\
0&0&I_{n}&0\\
I_{r}&0&0&0\\
\end{pmatrix}.
$$
Thus we have
$$V_0=\sp \{Z_{1},\ldots,Z_{r}\},
\quad V= V_0 \oplus \sp \{Z'_{1},\ldots, Z'_{q-r}\}$$
and denote
$$V':=\sp\{Z'_{1},\ldots,Z'_{q-r}\}, \quad
X:=\sp \{X_{1},\ldots,X_{n}\}, \quad
Y:=\sp \{ Y_{1},\ldots, Y_{r} \}.$$
The basic form defines the natural duality pairings
$V_0\times Y\to \C$, $V'\times V'\to \C$, $X\times X\to \C$, i.e.\ we have the identifications
\begin{equation}\Label{duals}
\1{V_0}\cong Y^{*}, \quad \1{V'}\cong V'^{*}, \quad \1X\cong X^{*},
\end{equation}
where the ``bar'' over a complex vector space
always denotes the same real vector space with
the negative complex structure.

\subsection{The tangent space of $S_{p,q,r}$}
The tangent space to the Grassmanian $G_{p,q}$ of all
$q$-dimensional subspaces in $\C^{p+q}$ at the element $V$
is isomorphic to $\Hom(V,\C^{p+q}/V)$.
Hence, given an adapted frame $(Z,Z',X,Y)$, it is isomorphic to
$$T_{V}G_{p,q}=\Hom(V,X\oplus Y).$$
Taking into account the splitting $V=V_0\oplus V'$,
the elements of
$$\Hom(V,X\oplus Y)=\Hom(V_0\oplus V',X\oplus Y)$$
are given by block $2\times 2$ matrices decomposed as
\begin{equation}
R\in
\begin{pmatrix}
\Hom(V_0,X) & \Hom(V_0,Y)\\
\Hom(V',X) & \Hom(V',Y)
\end{pmatrix}.
\end{equation}
Then the real tangent space $T_{V}S_{p,q,r}$ to $S_{p,q,r}$
is
\begin{equation}\Label{decomp}
T=T_{V}S_{p,q,r}=
\begin{pmatrix}
*& \2R\\
* &*
\end{pmatrix},
\quad \2R=-\2R^{*},
\end{equation}
the complex tangent subspace is
\begin{equation}
T^{c}=
\begin{pmatrix}
*& 0\\
* &*
\end{pmatrix}.
\end{equation}
%Indeed, differentiating
%$\langle Z_{\a}, Z_{\b}\rangle =0$
%along $S$ we obtain
%$$\phi_{\a}$$

The complex tangent space $T^{c}$
contains further two invariantly defined subspaces
\begin{equation}
T^{-}:= \{ R\in T^{c} : R(V_0)\subset V\}=
\begin{pmatrix}
0& 0\\
* &*
\end{pmatrix}, \quad
T^{+}:= \{ R\in T^{c} : \langle R(V),V_0 \rangle=0 \}=
\begin{pmatrix}
*& 0\\
* &0
\end{pmatrix},
\end{equation}
such that
$$T^{+}\cap T^{-} = T^{0}, \quad T^{+}+T^{-}=T^{c}.$$

\subsection{The connection matrix form}
Write $S:=S_{p,q,r}$ and
denote by $\6B\to S$ the adapted frame bundle
and by $\pi$ the Maurer-Cartan (connection) form on $\6B$
satisfying the symmetry $\pi^{*}=-\pi$ and
the structure equation $d\pi= \pi\wedge \pi$.
Then we can write
\begin{equation}\Label{pi}
\begin{pmatrix}
dZ_{\a}\\
dZ'_{u}\\
dX_{k}\\
dY_{\a}
\end{pmatrix}
= \pi
\begin{pmatrix}
Z_{\b}\\
Z'_{v}\\
X_{j}\\
Y_{\b}
\end{pmatrix}
=
\begin{pmatrix}
\psi_{\a}^{~\b} & \theta_{\a}^{~v} & \theta_{\a}^{~j} & \phi_{\a}^{~\b}\\
\sigma_{u}^{~\b} &  \omega_{u}^{~v} & \delta_{u}^{~j} & \theta_{u}^{~\b} \\
\sigma_{k}^{~\b} & \delta_{k}^{~v} & \omega_{k}^{~j} & \theta_{k}^{~\b}\\
\xi_{\a}^{~\b} & \sigma_{\a}^{~v} & \sigma_{\a}^{~j} & \3\psi_{\a}^{~\b}\\
\end{pmatrix}
\begin{pmatrix}
Z_{\b}\\
Z'_{v}\\
X_{j}\\
Y_{\b}
\end{pmatrix},
\end{equation}
where the matrix $\pi$ satisfies the symmetry relation
\begin{equation}\Label{sym-rel}
\begin{pmatrix}
\psi_{\a}^{~\b} & \theta_{\a}^{~v} & \theta_{\a}^{~j} & \phi_{\a}^{~\b}\\
\sigma_{u}^{~\b} &  \omega_{u}^{~v} & \delta_{u}^{~j} & \theta_{u}^{~\b} \\
\sigma_{k}^{~\b} & \delta_{k}^{~v} & \omega_{k}^{~j} & \theta_{k}^{~\b}\\
\xi_{\a}^{~\b} & \sigma_{\a}^{~v} & \sigma_{\a}^{~j} & \3\psi_{\a}^{~\b}\\
\end{pmatrix}
=-
\begin{pmatrix}
\3\psi_{\bar\b}^{~\bar\a} & \e_{v}\theta_{\bar v}^{~\bar\a} &  \e_{j}\theta_{\bar j}^{~\bar\a} & \phi_{\bar\b}^{~\bar\a}\\
\e_{u}\sigma_{\bar \b}^{~\bar u} &  \e_{u}\e_{v} \omega_{\bar v}^{~\bar u} & \e_{u}\e_{j}\delta_{\bar j}^{~\bar u} & \e_{u}\theta_{\bar \b}^{~\bar u} \\
\e_{k}\sigma^{~\bar k}_{\bar\b} &  \e_{k}\e_{v}\delta_{\bar v}^{~\bar k} & \e_{k}\e_{j}\omega_{\bar j}^{~\bar k} & \e_{k}\theta_{\bar \b}^{~\bar k}\\
\xi_{\bar\b}^{~\bar\a} &  \e_{v}\sigma_{\bar v}^{~\bar \a} & \e_{j}\sigma_{\bar j}^{~\bar \a} & \psi_{\bar\b}^{~\bar\a}\\
\end{pmatrix},
\end{equation}
where
\begin{equation}\Label{signs}
\e_{u}:=\langle Z'_{u}, Z'_{u}\rangle =-1, \quad u=1,\ldots,q-r, \quad
\e_{j}:=\langle X_{j}, X_{j}\rangle=1, \quad j=1,\ldots,n.
\end{equation}
For instance, differentiating $\langle Z_{\a},X_{j}\rangle=0$ we obtain
$$\langle dZ_{\a},X_{j}\rangle + \langle Z_{\a}, dX_{j}\rangle =0$$
implying
$$\theta_{\a}^{~j}\langle X_{j}, X_{j}\rangle
+\theta_{j}^{~\a}\langle Z_{\a}, Y_{\a}\rangle=0$$
and hence
$$\theta_{\a}^{~j}=-\e_{j}\, \theta_{\bar j}^{~\bar\a}.$$

In the sequel, as in \cite{KZ}, we shall always work with a local section
of the frame bundle $\6B\to S$ and routinely identify forms on
$\6B$ with their pullbacks to $S$ via that section.
With that identification in mind,
the forms $\phi_{\a}^{~\b}$ give a basis
in the space of all {\em contact forms},
i.e.\ forms vanishing on $T^{c}$.
Furthermore, the upper right block forms
$$\begin{pmatrix}
\theta_{\a}^{~j} & \phi_{\a}^{~\b}\\
\delta_{u}^{~k} & \theta_{u}^{~\b}
\end{pmatrix}
$$
give together a basis
in the space of all $(1,0)$ forms on $S$.

\medskip
We shall employ several types of frame changes.

\begin{definition}\Label{changes}
{\rm We call a change of frame}
\begin{enumerate}
\item[i)]change of position {\rm if}
$$
\widetilde Z_\alpha=W_\alpha^{~\beta}Z_\beta,\quad \widetilde Z'_u=W_u^{~\beta}Z_\b+W_u^{~v}Z'_v, \quad
\widetilde Y_\alpha=V_\alpha^{~\beta}Y_\beta+V_\a^{~v}Z'_v,\quad
\widetilde X_j=X_j,
$$
{\rm where $W_0=(W_\alpha^{~\beta})$ and $V_0=(V_\alpha^{~\beta})$ are
$r\times r$ matrices satisfying $V_0^*W_0=I_r$, $W'=(W_u^{~v})$ is a $(q-r)\times(q-r)$ matrix satisfying $W'^*W'=I_{q-r}$ and $V_\a^{~\b}{W^{*}}_\b^{~\gamma}+V_\a^{~v}{W^{*}}_v^{~\gamma}=0$};

\item[ii)]change of real vectors {\rm if}
$$
\widetilde Z_\alpha=Z_\alpha,\quad \widetilde Z'_u=Z'_u,\quad
\widetilde X_j=X_j,\quad
\widetilde Y_\alpha=Y_\alpha+H_\alpha^{~\beta}Z_\beta,
$$
or
\begin{equation}
\begin{pmatrix}
\2Z_{\a}\\
\2Z'_u\\
\2X_{j}\\
\2Y_{\a}
\end{pmatrix}
=
\begin{pmatrix}
I_{r}& 0 & 0 & 0\\
0 & I_{q-r} &0&0\\
0 & 0 & I_{n} & 0\\
H_{\a}^{~\b}&0&0& I_{r}
\end{pmatrix}
\begin{pmatrix}
Z_{\b}\\
Z'_v\\
X_{k}\\
Y_{\b}
\end{pmatrix},
\end{equation}
{\rm where $H=(H_\alpha^{~\beta})$ is a skew hermitian matrix};

\item[iii)]dilation {\rm if}
$$
\widetilde Z_\alpha=\lambda_{\alpha}^{-1}Z_\alpha,\quad \widetilde Z'_u=Z'_u, \quad
\widetilde Y_\alpha=\lambda_\alpha Y_\alpha,\quad
\widetilde X_j=X_j,
$$
{\rm where $\lambda_\alpha>0$};

\item[iv)]rotation {\rm if}
$$
\widetilde Z_\alpha=Z_\alpha,\quad \widetilde Z'_u=Z'_u,\quad
\widetilde Y_\alpha=Y_\alpha,\quad
\widetilde X_j=U_j^{~k}X_k,
$$
{\rm where $(U_j^{~k})$ is a unitary matrix.}
\end{enumerate}
\end{definition}
\medskip

Consider a change of position
as in Definition~\ref{changes}.
Then
$\phi$,
$\theta$ and $\delta$ change to
\begin{align}
\widetilde
\phi_\alpha^{~\beta}&=W_\alpha^{~\gamma}\phi_\gamma^{~\delta}W^{*}{}_{\delta}^{~\b},
\\
\widetilde\theta_\alpha^{~j}&=W_\alpha^{~\beta}\theta_\beta^{~j},\\
\widetilde\theta_u^{~\a}&=
W_u^{~v}\theta_v^{~\b}W^{*}{}_{\b}^{~\a}+W_u^{~\b}\phi_\b^{~\gamma}W^{*}{}_\gamma^{~\a}\\
\widetilde\delta_\a^{~j}&=W_u^{~v}\delta_v^{~j},
\end{align}
where $W^{*}{}_{\delta}^{~\b}=\overline{W_{\beta}^{~\delta}}$.
We shall also make use of the change of frame given by
$$
\widetilde Z_\alpha=Z_\alpha,\quad \widetilde Z'_u=Z'_u,\quad
\widetilde X_j=X_{j} + C_j^{~\beta}Z_\beta,\quad
\widetilde Y_\alpha=Y_\alpha+A_\alpha^{~\beta}Z_\beta+B_\alpha^{~j}X_j,
$$
or
\begin{equation}
\begin{pmatrix}
\2Z_{\a}\\
\2Z'_u\\
\2X_{j}\\
\2Y_{\a}
\end{pmatrix}
=
\begin{pmatrix}
I_{r} & 0 & 0 &0\\
0 & I_{q-r} & 0 & 0\\
C_{j}^{~\b}&0 & I_{n} & 0\\
A_{\a}^{~\b}& 0& B_{\a}^{~j}& I_{r}
\end{pmatrix}
\begin{pmatrix}
Z_{\b}\\
Z'_u\\
X_{k}\\
Y_{\b}
\end{pmatrix},
\end{equation}
such that
$$C_j^{~\alpha}+B_j^{~\alpha}=0$$
and
$$(A_\alpha^{~\beta} + \overline{A_\beta^{~\alpha}})
+B_\alpha^{~j}B_j^{~\beta}=0,$$
where
$$B_j^{~\alpha}:=\overline{B_\alpha^{~j}}.$$
Then the new frame $(\widetilde Z,\widetilde Z',\widetilde Y,\widetilde X)$ is an $S_{p,q,r}$-frame. In fact,
\begin{multline}
0=\langle \2Y_{\a},\2Y_{\b}\rangle=
\langle Y_\alpha+A_\alpha^{~\delta}Z_\delta+B_\alpha^{~j}X_j,
Y_\b+A_\b^{~\g}Z_\g+B_\b^{~k}X_k \rangle \\
=
 A_{\a}^{~\b} \langle Z_{\b}, Y_{\b}\rangle
 + \1{A_{\b}^{~\a}} \langle Y_{\a}, Z_{\a}\rangle
 +\sum_{j}B_{\a}^{~j}\1{B_{\b}^{j}}\langle X_{j}, X_{j}\rangle
 = (A_{\a}^{~\b} + \1{A_{\b}^{~\a}})+ \sum_{j }B_{\a}^{~j}\1{B_{\b}^{j}},
\end{multline}
and
\begin{multline}
0=\langle \2X_{j}, \2Y_{\a}\rangle=
\langle X_{j} + C_j^{~\beta}Z_\beta ,
Y_\alpha+A_\alpha^{~\delta}Z_\delta+B_\alpha^{~k}X_k
\rangle
=
 C_{j}^{~\a} \langle Z_{\a}, Y_{\a}\rangle
 + \1{B_{\a}^{~j}} \langle X_{j}, X_{j}\rangle
  = C_{j}^{~\a}
 + \1{B_{\a}^{~j}},
\end{multline}
whereas the other scalar products are obviously zero.
Furthermore, we claim that the related $1$-forms $\widetilde\phi_\alpha^{~\beta}$ and $\widetilde\theta_u^{~\a}$ remain the same, while $\widetilde\theta_\alpha^{~j}$ and $\widetilde\delta_\a^{~j}$ change to
\begin{align}
\widetilde\theta_\alpha^{~j}&=\theta_\alpha^{~j}-\phi_\alpha^{~\beta}B_\beta^{~j},\\
\widetilde\delta_u^{~j}&=\delta_u^{~j}-\theta_u^{~\beta}B_\beta^{~j}.
\end{align}
Indeed, differentiation yields
\begin{align}
d\2Z_{\a}& =
\2\psi_{\a}^{~\b}\2Z_{\b} + \2\theta_{\a}^{~v}\2Z'_v+\2\theta_{\a}^{~j}\2X_{j} +  \2\phi_{\a}^{~\b} \2Y_{\b}\\
&=
\2\psi_{\a}^{~\b}Z_{\b} + \2\theta_{\a}^{~v}Z'_v+\2\theta_{\a}^{~j}(X_{j} + C_j^{~\beta}Z_\beta) +  \2\phi_{\a}^{~\b} (Y_\b+A_\b^{~\g}Z_\g+B_\b^{~j}X_j)\\
 &=dZ_{\a}=\psi_{\a}^{~\b}Z_{\b} +\theta_{\a}^{~v}Z'_v+ \theta_{\a}^{~j}X_{j} +  \phi_{\a}^{~\b} Y_{\b}
\end{align}
and
\begin{align}
d\2Z_{u}& =
\2\sigma_{u}^{~\b}\2Z_{\b} + \2\omega_{u}^{~v}\2Z'_v+\2\delta_{u}^{~j}\2X_{j} +  \2\theta_{u}^{~\b} \2Y_{\b}\\
&=
\2\sigma_{u}^{~\b}Z_{\b} + \2\omega_{u}^{~v}Z'_v+\2\delta_{u}^{~j}(X_{j} + C_j^{~\beta}Z_\beta) +  \2\theta_{u}^{~\b} (Y_\b+A_\b^{~\g}Z_\g+B_\b^{~j}X_j)\\
 &=dZ_{u}=\sigma_{u}^{~\b}Z_{\b} +\omega_{u}^{~v}Z'_v+ \delta_{u}^{~j}X_{j} +  \theta_{u}^{~\b} Y_{\b}
\end{align}
and the claim follows from identifying the coefficients.
\medskip

\subsection{Structure identities}
The structure equations yield
\begin{equation}\Label{struc-phi}
d\phi_{\a}^{~\b}= \theta_{\a}^{~j}\wedge \theta_{j}^{~\b} +
\theta_{\a}^{~u}\wedge \theta_{u}^{~\b} \mod \phi,
\end{equation}
\begin{equation}\Label{struc-theta1}
d\theta_{\a}^{~j}= \theta_{\a}^{~v}\wedge \delta_{v}^{~j}  \mod
\{\theta_{\b}^{~k}, \phi\},
\end{equation}
\begin{equation}\Label{struc-theta2}
d\theta_{u}^{~\b}= \delta_{u}^{~k}\wedge \theta_{k}^{~\b}  \mod
\{\theta_{v}^{~\a}, \phi\},
\end{equation}
where $\phi$ stands for the span of all $\phi_\a^{~\b}$.
%where
%$$\theta_{\a}$$
The first one via Cartan's formula
$$d\tau(R_{1},R_{2})=R_{1}\tau(R_{2})-R_{2}\tau(R_{1})-\tau([R_{1},R_{2}]),$$
determines the invariant tensor
\begin{equation}
\6L=\6L_{1}\colon T^{1,0}\times T^{1,0} \to \frac{\C T}{T^{1,0}+T^{0,1}},  \quad (R_{1},R_{2})\mapsto [R_{1},\1R_{2}] \mod T^{1,0}+T^{0,1},
\end{equation}
which, in the decomposition \eqref{decomp}, takes the form
\begin{equation}
\left(
\begin{pmatrix}
a_{1}&0\\
c_{1}&d_{1}
\end{pmatrix},
\begin{pmatrix}
a_{2}&0\\
c_{2}&d_{2}
\end{pmatrix}
\right) \mapsto
a_{2}^{*}a_{1} - d_{1}d_{2}^{*} \in \Hom (V_0,Y),
\end{equation}
and represents the Levi form of $S$ up to imaginary constant.
In particular,
\begin{equation}
K:=
\begin{pmatrix}
0& 0\\
* &0
\end{pmatrix}
\subset T^{c}
\end{equation}
is the kernel of the Levi form of $S$.
In more invariant terms, $\6L_{1}$ splits into the sum of two
tensors
\begin{equation}\Label{L-e1}
\Hom (V_0,X)\times \Hom(V_0,X) \to \Hom (V_0)\otimes \1{V_0},\C),
\quad (a_{1},a_{2})\mapsto \langle a_{1}, a_{2}\rangle,\end{equation}
\begin{equation}\Label{L-e2}
\Hom (V',Y)\times \Hom(V',Y) \to \Hom (V_0\otimes \1{V_0},\C),
\quad (d_{1},d_{2})\mapsto -\langle d_{2}^{*}, d_{1}^{*}\rangle,
\end{equation}
where we have used the identifications \eqref{duals}.

Similarly,
\eqref{struc-theta1} and \eqref{struc-theta2} determine together
the  invariant tensor
\begin{equation}\Label{2nd}
\6L_{2}\colon K^{1,0}\times T^{1,0}\to \frac{T^{1,0}}{K^{1,0}} \cong \frac{T^{1,0}+T^{0,1}}{K^{1,0}+T^{0,1}},
\quad (R_{1},R_{2})\mapsto [R_{1},\1R_{2}] \mod K^{1,0}+T^{0,1}.
\end{equation}
Note that since $K^{1,0}$ is in the (complexified) Levi kernel,
one always has $[R_{1},\1R_{2}]\subset T^{1,0}+T^{0,1}$.
The tensor $\6L_{2}$ can be regarded as the ``second order Levi form''
that comes naturally into consideration along with the (first order) Levi form $\6L_{1}$ to gain the ``missing nondegeneracy''.
In the decomposition \eqref{decomp}, $\6L_{2}$ takes the form
\begin{equation}
\left(
\begin{pmatrix}
0&0\\
c_{1}&0
\end{pmatrix},
\begin{pmatrix}
a_{2}&0\\
c_{2}&d_{2}
\end{pmatrix}
\right) \mapsto
(-c_{1 }d_{2}^*) \oplus a_{2}^{*} c_{1} \in \Hom (V_0,X)\oplus\Hom(V',Y),
\end{equation}
or, in more invariant terms,
splits into the sum of two tensors
\begin{equation}
\Hom (V',X)\times \Hom(V',Y) \to \Hom (V_0\otimes \1{X},\C),
\quad (c_{1},d_{2})\mapsto - \langle d_{2}^{*}, c_{1}^{*}\rangle,\end{equation}
\begin{equation}
\Hom (V',X)\times \Hom(V_0,X) \to \Hom (V'\otimes \1{V_0},\C),
\quad (c_{1},a_{2})\mapsto \langle c_{1}, a_{2}\rangle,
\end{equation}

\subsection{Important special cases}
The case
$$q\le p, \quad r=0,$$
corresponds to the Grassmanian of all ``maximal negative definite subspaces'',
which is the bounded symmetric domain of type $I_{p,q}$,
where $q$ is the rank.
More generally, the case
$$0< r\leq q$$
corresponds to the rank $r$ boundary component
of the above bounded symmetric domain.
%The latter case is also characterized by the properties
%$$\langle Z'_{u}, Z'_{u}\rangle = \e_{u}=-1, \quad
%\langle X_{j}, X_{j}\rangle = \e_{j}= 1.$$
Then, in view of \eqref{L-e1} and \eqref{L-e2},
the tensor $\6L$ can be represented by the sesqui-linear map
\begin{equation}
\Hom(V_0,V'\oplus X) \times  \Hom(V_0,V'\oplus X) \to
\Hom(V_0\otimes \1{V_0},\C), \quad (h_{1},h_{2})\mapsto \langle h_{1}, h_{2}\rangle_{0},
\end{equation}
where $\langle\cdot,\cdot\rangle_{0}$
is the standard positive definite hermitian form
making the basis $Z'_{u},X_{j}$ orthonormal.

\subsection{Structure tensor identities for CR-maps}

Let $M=S_{p,q,r}$ and $M'=S_{p',q',r'}$. We shall consider a CR-map $f\colon M\to M'$,
write latin $a,b,c,\ldots$ instead of Greek $\a,\b,\g,\ldots$, and capital instead of small roman letters for the connection forms on $M'$
and as in \cite{KZ}, by slight abuse of notation,
use the same letters to denote pullbacks of these forms to $M$ via $f$.
The structure equation \eqref{struc-phi}
and its analogue for $M'$ imply the equivariance identity
for the first structure tensors:
\begin{equation}\Label{L-eq}
f_{*}\6L_{1}(R_{1},R_{2})= \6L'_{1}(f_{*}R_{1},f_{*}R_{2}).
\end{equation}

As before we identify the complexified normal space
$\C T / (T^{1,0}+ T^{0,1})$ with
$\Hom (V_0\otimes \1{V_0}, \C)$, i.e.\
with the space of all sesqui-linear forms on $V_0$.
Those forms are spanned by the rank one forms
$\mu\otimes\1\mu$, where $\mu\colon V_0\to\C$ is a
complex-linear functional.

Choose any complex-linear functional $\mu\colon V'\to\C$
such that
\begin{equation}\Label{nonvanish}
f_{*}(\mu\otimes\1\mu)\ne0 \in \C TS'/(T^{1,0}S'+ T^{0,1}S').
\end{equation}
Then for the given frame $Z_{\a}, Z'_{u}, X_{j}, Y_{\a}$ on $S$,
the rank $1$ homomorphisms
$$\mu Z'_{u}\in \Hom(V',V''), \quad \mu X_{j}\in \Hom(V',X)$$
yield tangent vectors
$$
Z'^{\mu}_{u}:=
\begin{pmatrix}
0&0\\
0& (\mu Z'_{u})^{*}
\end{pmatrix},
\quad
X_{j}^{\mu}:=
\begin{pmatrix}
\mu X_{j}&0\\
0&0
\end{pmatrix},
$$
which are in view of  \eqref{L-e1} and \eqref{L-e2},
pairwise $\6L$-orthogonal and satisfy
$$\6L(Z'^{\mu}_{u},Z'^{\mu}_{u})=-(\mu\otimes\1\mu) \langle Z'_{u},Z'_{u}\rangle =
-\e_{u} (\mu\otimes\1\mu), \quad
\6L(X_{j}^{\mu},X_{j}^{\mu})=(\mu\otimes\1\mu) \langle X_{j},X_{j}\rangle =
\e_{j} (\mu\otimes\1\mu).$$
In view of \eqref{L-eq},
the push-forwards
$f_{*}Z'^{\mu}_{u}$, $f_{*}X_{j}^{\mu}$
are pairwise $\6L'$-orthogonal and satisfy
\begin{equation}\Label{levi-id}
\6L'(f_{*}Z'^{\mu}_{u},f_{*}Z'^{\mu}_{u})=-\e_{u} f_{*}(\mu\otimes\1\mu), \quad
\6L'(f_{*}X_{j}^{\mu},f_{*}X_{j}^{\mu})=\e_{j} f_{*}(\mu\otimes\1\mu).
\end{equation}

\section{Determination of $\Phi_{a}{}^{b}$}

\subsection{Determination of $\Phi_{1}{}^{1}$}

Choose a diagonal contact form of $M'$ and say
$\Phi_{1}{}^{1}$. Since contact forms are spanned by $\phi_{\a}{}^{\b}$,
we can write
$$\Phi_{1}{}^{1} = c_{\a}{}^{\b} \phi_{\a}{}^{\b}$$
for some smooth functions $c_{\a}{}^{\b}$. At generic points, we may assume that either $c_{\a}{}^{\b}\equiv 0$ or the matrix $(c_{\a}{}^{\b})$ is of constant rank $l\geq 1$.
As in \cite{KZ}, after a unitary change of frame on $M$,
we obtain
\begin{equation*}
\Phi_1^{~1}=\sum_{\alpha=1}^r c_\alpha \phi_\alpha^{~\alpha}
\end{equation*}
for smooth functions $c_\alpha$. If $c_{\a}{}^{\b}\equiv 0$, then $c_\alpha\equiv 0$ for all $\a$ and if the matrix $(c_{\a}{}^{\b})$ has constant rank $l\geq 1$, then we may assume that $c_\alpha,~\alpha=1,\ldots,l,$ never vanish and $c_\alpha\equiv 0$ for $\alpha>l$. Then using \eqref{struc-phi} and its analogue for $M'$ we obtain
\begin{equation}\Label{struc-phi11}
 \Theta_{1}^{~J}\wedge \Theta_{J}^{~1} +
\Theta_{1}^{~U}\wedge \Theta_{U}^{~1}
= \sum_{\a} c_{\a}(
\theta_{\a}^{~j}\wedge \theta_{j}^{~\a} +
\theta_{\a}^{~u}\wedge \theta_{u}^{~\a}
)
\mod \phi,
\end{equation}
Arguing similar to \cite{KZ} we conclude $c_{\a}\geq0$
and, after dilation, $c_{1}=1$ if $c_1\not\equiv 0$.

Along with the span $\phi$ used before we shall use shortcut notation $\theta$ (resp.\ $\Theta$ for $M'$) for the span of the $(1,0)$ forms $\theta_\a^{~j}$, $\theta_u^{~\a}$.
Since in view of \eqref{levi-id}, $f$ sends the Levi kernel of $M$ given by $\phi=\theta=0$
into the Levi kernel of $M'$ given by $\Phi=\Theta=0$, we can write
\begin{align}
\Theta_{1}^{~J}= h_{j}^{J,\a} \theta_{\a}^{~j}   + g_{\a}^{J,u} \theta_{u}^{~\a}
&\mod\phi,  \Label{theta-exp} \\
\Theta_{U}^{~1} = \eta_{U,j}^{\a} \theta_{\a}^{~j} + \xi_{U,\a}^{u} \theta_{u}^{~\a}
&\mod\phi.\Label{theta-u}
\end{align}
Then \eqref{struc-phi11} together with symmetry relations \eqref{sym-rel} implies
\begin{align}
\sum_{J} h_{j}^{J,\a} \1{h_{k}^{J,\b} }  + \sum_{U} \eta_{U,j}^{\a} \1{\eta_{U,k}^{\b} }
&= c_{\a}{\hat\delta}_{\a\b}\hat\delta_{jk}, \Label{h}
\\
\sum_{J} h_{j}^{J,\a} \1{g_{\b}^{J,u} }  + \sum_{U} \eta_{U,j}^{\a} \1{\xi_{U,\b}^{u} }
&= 0, \Label{hg}
\\
\sum_{J} g_{\a}^{J,u} \1{g_{\b}^{J,v} }  + \sum_{U} \xi_{U,\a}^{u} \1{\xi_{U,\b}^{v} }
&= c_{\a}{\hat\delta}_{\a\b}\hat\delta_{uv}, \Label{g}
\end{align}
where $\hat\delta$ is the Kronecker delta. If $c_\alpha\equiv 0$ for all $\alpha$, then from \eqref{h}, \eqref{g} and \eqref{theta-exp}, \eqref{theta-u} we obtain
$$\Theta_1^{~J}=\Theta_{U}^{~1}=0~\mod\phi.$$

Now suppose that $c_1=1$. Substituting \eqref{theta-exp} and \eqref{theta-u} respectively into
 the analogs of \eqref{struc-theta1} and \eqref{struc-theta2} for $M'$ yields
\begin{align}
h_{j}^{J,\a} d\theta_{\a}^{~j}   + g_{\a}^{J,u} d\theta_{u}^{~\a}
=  (\eta_{\a}^{V,j}  \theta_{j}^{~\a}
+ \xi_{u}^{V,\a} \theta_{\a}^{~u})
\wedge \Delta_{V}^{~J}
&\mod
\theta, \phi,  \Label{struc-theta1'}\\
\eta_{U,j}^{\a} d\theta_{\a}^{~j} + \xi_{U,\a}^{u}  d\theta_{u}^{~\a}
= \Delta_{U}^{~J}\wedge  (h_{J,\a}^{j} \theta_{j}^{~\a} + g_{J,u}^{~\a} \theta_{\a}^{~u} )
& \mod
\theta, \phi,
\Label{struc-theta2'}
\end{align}
where
\begin{equation}
\eta_{\a}^{U,j} :=
-\1{\eta_{U,j}^{\a}}, \quad
\xi_{u}^{U,\a} :=
\1{\xi_{U,\a}^{u}},
\quad h_{J,\a}^{j}:=  \1{h_{j}^{J,\a}},
\quad g_{J,u}^{\a}:=- \1{g_{\a}^{J,u}}.
\end{equation}
Using \eqref{struc-theta1} and \eqref{struc-theta2}, we rewrite \eqref{struc-theta1'} and \eqref{struc-theta2'} as
\begin{align}
h_{j}^{J,\a}  \theta_{\a}^{~u}\wedge \delta_{u}^{~j}   + g_{\a}^{J,u} \delta_{u}^{~j}\wedge \theta_{j}^{~\a}
=  (\eta_{\a}^{V,j}  \theta_{j}^{~\a}
+ \xi_{u}^{V,\a} \theta_{\a}^{~u})
\wedge \Delta_{V}^{~J}
&\mod
\theta, \phi,  \Label{struc-theta1''}\\
\eta_{U,j}^{\a} \theta_{\a}^{~u}\wedge \delta_{u}^{~j}
+ \xi_{U,\a}^{u} \delta_{u}^{~j} \wedge \theta_{j}^{~\a}
=\Delta_{U}^{~J}\wedge  (h_{J,\a}^{j} \theta_{j}^{~\a} + g_{J,u}^{~\a} \theta_{\a}^{~u} )
& \mod
\theta, \phi. \Label{struc-theta2''}
\end{align}
By Cartan's Lemma,
\begin{align}
\eta_{\a}^{V,j} \Delta_{V}^{~J}  = - g_{\a}^{J,u} \delta_{u}^{~j} &\mod \theta, \bar\theta, \phi,
\Label{cartan}
\\
g_{J,u}^{\a} \Delta_{U}^{~J}  = - \eta_{U,j}^{\a} \delta_{u}^{~j} &\mod \theta, \bar\theta, \phi,
\Label{cartan1}
\end{align}

For $\a$, $j$ fixed, consider vector $\eta_{\a}^{j}:=(\eta_{\a}^{U,j})_{U}\in \C^{q'-r'}$.
Since $q'-r'<n$ by \eqref{main-ineq}, these vectors are linearly dependent, i.e.\
$ \sum d_{j} \eta_{\a}^{j} =0$ for some $(d_{1},\ldots,d_{n})\ne0$.
Then by \eqref{cartan},
\begin{equation}
 g_{\a}^{J,u} d_{j}\delta_{u}^{~j} =0\mod \theta, \bar\theta, \phi.
\end{equation}
Since $q-r\ge 1$ and $\delta_{u}^{~j}$, $1\le u\le q-r$, $1\le j\le n$, are linearly independent modulo $\theta, \bar\theta, \phi$,
it follows that $ g_{\a}^{J,u}=0$, and hence $\eta_{U,j}^{\a}=0$ by \eqref{cartan1}.

Now it follows from \eqref{h} that the vectors $h_{j}^{\a}:=(h_{j}^{J,\a})_{J}$
are pairwise orthogonal and have length $c_{\a}$ independent of $j$.
Then after a unitary rotation of the frame as in \cite{KZ} (proof of Lemma~4.1),
we may assume that the vectors $h_{j}^{1}$ of length $c_{1}=1$
are precisely the first $n$ standard vectors in $\C^{n'}$.
Using $n'<2n$ and repeating the rest of the argument in \cite{KZ}, we conclude $s=1$
and hence
\begin{align}
\Phi_{1}^{~1}&=\phi_{1}^{~1},\Label{phi11-norm} \\
\Theta_1^{~J}&=\theta_{1}^{~J} \mod \phi.\Label{theta-norm}
\end{align}

Finally \eqref{g} is now of the form
$$
\sum_{U} \xi_{U,\a}^{u} \1{\xi_{U,\b}^{v} }
= \hat\delta_{1\a}{\hat\delta}_{\a\b}\hat\delta_{uv}. \Label{g'}
$$
Then after a unitary change of frame we obtain
\begin{equation}
\Theta_{U}^{~1} = \theta_{U}^{~1} \mod \phi.
\end{equation}

Summarizing we obtain
\begin{lemma}\Label{r1}
\begin{align}
\Theta_1^{~J}&=c_1\theta_1^{~J}\mod\phi,\\
\Theta_{U}^{~1}& =c_1\theta_U^{~1}\mod \phi,
\end{align}
where $\Phi_1^{~1}=c_1\phi_1^{~1}$,
and $c_1$ is either $0$ or $1$.
\end{lemma}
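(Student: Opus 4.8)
The plan is to combine the two structure-equation computations done above for $\Phi_1{}^1$: the ``first order'' identity \eqref{struc-phi11} coming from $d\phi$, and the ``second order'' identities \eqref{cartan}, \eqref{cartan1} coming from $d\theta$. The upshot of Lemma~\ref{r1} is just that once $\Phi_1{}^1=c_1\phi_1{}^1$ (with $c_1\in\{0,1\}$ by the dilation normalization already carried out), the two $(1,0)$-form families $\Theta_1{}^J$ and $\Theta_U{}^1$ are, modulo contact forms $\phi$, scalar multiples by the \emph{same} constant $c_1$ of $\theta_1{}^J$ and $\theta_U{}^1$. So the proof splits into the trivial case $c_1\equiv0$ and the case $c_1=1$, both of which have essentially been worked out in the running text.

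In the case $c_1\equiv0$ (equivalently $(c_\a{}^\b)\equiv0$, so all $c_\a\equiv0$), I would simply quote the observation already made: from the diagonal relations \eqref{h} and \eqref{g} with right-hand side $c_\a\hat\delta_{\a\b}\hat\delta_{jk}=0$ one gets that all the coefficient vectors $h_j^{J,\a}$, $\eta_{U,j}^\a$, $g_\a^{J,u}$, $\xi_{U,\a}^u$ vanish, and hence by the expansions \eqref{theta-exp}, \eqref{theta-u} we get $\Theta_1{}^J=\Theta_U{}^1=0\bmod\phi$. That is exactly the assertion of the lemma with $c_1=0$.

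In the case $c_1=1$, the argument is the chain already displayed: linear dependence of the vectors $\eta_\a^j=(\eta_\a^{U,j})_U\in\C^{q'-r'}$ (forced by the second inequality $q'-r'<p-r=n$ in \eqref{main-ineq}) plugged into the Cartan-lemma relation \eqref{cartan}, together with the linear independence of the $\delta_u{}^j$ modulo $\theta,\bar\theta,\phi$ (using $q-r\ge1$), kills $g_\a^{J,u}$; then \eqref{cartan1} kills $\eta_{U,j}^\a$. With those terms gone, \eqref{h} says the vectors $h_j^\a=(h_j^{J,\a})_J$ are orthogonal of common length $c_\a$; a unitary rotation on $M$ (as in the proof of Lemma~4.1 of \cite{KZ}) makes $h_j^1$ the first $n$ standard basis vectors of $\C^{n'}$, and then the first inequality $p'-r'<2(p-r)$, i.e.\ $n'<2n$, lets one repeat verbatim the $\cite{KZ}$ computation to force the multiplicity $s=1$, giving \eqref{phi11-norm}, \eqref{theta-norm}, i.e.\ $\Theta_1{}^J=\theta_1{}^J\bmod\phi$. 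Finally \eqref{g} collapses to $\sum_U\xi_{U,\a}^u\overline{\xi_{U,\b}^v}=\hat\delta_{1\a}\hat\delta_{\a\b}\hat\delta_{uv}$, so a further unitary change of frame gives $\Theta_U{}^1=\theta_U{}^1\bmod\phi$; together these are the $c_1=1$ case of the lemma.

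The step I expect to be the main obstacle is the ``$n'<2n$ forces $s=1$'' reduction, which is invoked by reference to \cite{KZ}: it is the genuinely nontrivial rigidity input, analogous to the Huang-type inequality for maps between balls, and one must check that the presence of the extra $\delta_u{}^j$ directions (the new feature of lower-rank, Levi-degenerate components) does not interfere — which is precisely what the preceding vanishing of $g_\a^{J,u}$ and $\eta_{U,j}^\a$ guarantees, decoupling the $\Theta_1{}^J$ equations from the $\Theta_U{}^1$ ones so that the old argument applies. The bookkeeping to see that the two unitary frame changes (one to normalize the $h_j^1$, one to normalize the $\xi$'s) can be performed compatibly, and that the dilation fixing $c_1=1$ does not spoil anything, is routine but must be stated carefully.
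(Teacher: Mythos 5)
Your proposal is correct and follows essentially the same route as the paper: the paper's proof of Lemma~\ref{r1} is precisely the running argument you summarize, splitting into the trivial case $c_1\equiv 0$ (read off from \eqref{h}, \eqref{g} with vanishing right-hand side) and the case $c_1=1$, where the Cartan-lemma relations \eqref{cartan}, \eqref{cartan1} together with $q'-r'<n$ kill $g_\a^{J,u}$ and $\eta_{U,j}^\a$, after which \eqref{h}, the unitary rotation, the $n'<2n$ argument from \cite{KZ}, and the collapsed form of \eqref{g} yield \eqref{phi11-norm}, \eqref{theta-norm} and $\Theta_U^{~1}=\theta_U^{~1}\bmod\phi$. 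You also correctly identify the decoupling of the $\delta_u^{~j}$ directions as the new ingredient that lets the Shilov-boundary argument of \cite{KZ} go through.
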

Furthermore, by considering $\Phi_a^{~a}$ for arbitrary $a$, we can show the following lemma.
\begin{lemma}\Label{r2} Let $\theta^+$ and $\theta^-$ be ideals generated by $\theta_\alpha^{~j}$ and $\theta_u^{~\beta}$ respectively.
Then
\begin{align}
\Theta_a^{~J}&=0\mod \theta^+, \phi,\\
\Theta_{U}^{~a}& =0\mod \theta^-,\phi,
\end{align}
i.e., the subspace $T^+$ and $T^{-}$ are preserved by $f$. That is,
$$f_*(T^+)\subset T'^+,~f_*(T^{-})\subset T'^{-}.$$
\end{lemma}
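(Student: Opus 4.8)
The plan is to establish Lemma~\ref{r2} by repeating the analysis of \S3.1 with $\Phi_1^{~1}$ replaced by an arbitrary diagonal contact form $\Phi_a^{~a}$. For each fixed $a\in\{1,\dots,r'\}$ one writes $\Phi_a^{~a}=\sum_\alpha c_{a,\alpha}\,\phi_\alpha^{~\alpha}$ after a unitary normalization on $M$, and then expands the off-diagonal $(1,0)$-forms $\Theta_a^{~J}$ and $\Theta_U^{~a}$ in terms of the basis $\theta_\alpha^{~j},\theta_u^{~\alpha}$ modulo $\phi$, exactly as in \eqref{theta-exp}--\eqref{theta-u}. The key structural input is the analogue of the argument following \eqref{cartan}--\eqref{cartan1}: the dimension count $q'-r'<n$ from \eqref{main-ineq}, together with the linear independence of the $\delta_u^{~j}$ modulo $\theta,\bar\theta,\phi$ and the fact that $q-r\ge 1$ (since $r<q$), forces the "mixed" coefficients to vanish. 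Concretely, the vectors $(\eta_{a,U,j}^{\alpha})_U\in\C^{q'-r'}$ are linearly dependent in $j$, and plugging a nontrivial relation into the Cartan-lemma identities \eqref{cartan}--\eqref{cartan1} kills $g_{a,\alpha}^{J,u}$ and then $\eta_{a,U,j}^{\alpha}$. This says precisely that $\Theta_a^{~J}$ has no $\theta_u^{~\alpha}$ component and $\Theta_U^{~a}$ has no $\theta_\alpha^{~j}$ component modulo $\phi$, i.e. $\Theta_a^{~J}=0\bmod\theta^+,\phi$ and $\Theta_U^{~a}=0\bmod\theta^-,\phi$.

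Once these vanishing statements hold for every diagonal index $a$, I would deduce the off-diagonal cases $\Theta_a^{~J}$ for the full range and the geometric reformulation $f_*(T^+)\subset T'^+$, $f_*(T^-)\subset T'^-$. For this I recall from \S2 that $T^+$ is cut out inside $T^c$ by the vanishing of the forms $\theta_u^{~\alpha}$ (the "$\theta^-$" forms) and $T^-$ by the vanishing of the $\theta_\alpha^{~j}$ (the "$\theta^+$" forms), together with all contact forms $\phi$; and similarly on $M'$. Since $\Theta_U^{~A}$ and $\Theta_A^{~J}$ span the relevant $(1,0)$-forms on $M'$, the two congruences above say exactly that $f^*$ of the defining forms of $T'^+$ (resp.\ $T'^-$) lie in the ideal of defining forms of $T^+$ (resp.\ $T^-$), which is the dual formulation of the claimed inclusions of subspaces. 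One should also note that the normalization $c_{a,1}=1$ used in Lemma~\ref{r1} was only a convenience for computing $\Phi_1^{~1}$ explicitly; for Lemma~\ref{r2} we only need the vanishing of the mixed coefficients, which holds in both cases $c_{a,\alpha}\equiv 0$ and $c_{a,\alpha}\not\equiv 0$, so no case distinction obstructs the conclusion.

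The main obstacle I anticipate is bookkeeping rather than conceptual: one must check that the Cartan-lemma step genuinely applies for a general diagonal index $a$ and not just for $a=1$, i.e. that the structure equations \eqref{struc-theta1'}--\eqref{struc-theta2'} and their rewritten forms \eqref{struc-theta1''}--\eqref{struc-theta2''} hold verbatim with $1$ replaced by $a$ and with $\Delta_V^{~J}$ interpreted appropriately. This requires that the expansions of $d\Theta_a^{~J}$ and $d\Theta_U^{~a}$ via the $M'$-structure equations do not pick up new terms coupling different values of $a$; this is guaranteed because \eqref{struc-theta1} and \eqref{struc-theta2} are diagonal-free in the relevant sense, but it must be verified carefully. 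A secondary subtlety is that for indices $a>l$ (where $\Phi_a^{~a}$ may be a combination of several $\phi_\alpha^{~\alpha}$ or vanish), one has to argue the vanishing of the mixed coefficients directly from \eqref{h}--\eqref{g}-type identities; here the quadratic identities force $h_j^{J,\alpha}=\eta_{U,j}^{\alpha}=g_\alpha^{J,u}=\xi_{U,\alpha}^u=0$ whenever the corresponding $c_{a,\alpha}=0$, which again yields $\Theta_a^{~J}=\Theta_U^{~a}=0\bmod\phi$, a fortiori the desired congruences. Assembling these pieces gives Lemma~\ref{r2}.
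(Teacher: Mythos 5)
Your proposal is correct and matches the paper's approach: the paper disposes of Lemma~\ref{r2} with the single remark that one considers $\Phi_a^{~a}$ for arbitrary $a$, i.e.\ exactly the repetition of the \S3.1 analysis (expansion of $\Theta_a^{~J}$, $\Theta_U^{~a}$ modulo $\phi$, the Cartan-lemma identities \eqref{cartan}--\eqref{cartan1}, and the dimension count $q'-r'<n$ killing the mixed coefficients $g$ and $\eta$) that you spell out. You also correctly note that only the vanishing of the mixed coefficients is needed (not the later $n'<2n$ step), and that the degenerate case $c_{a,\alpha}\equiv 0$ follows a fortiori from the quadratic identities \eqref{h}--\eqref{g}.
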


\subsection{Determination of $\Phi_{2}^{~2}$ and $\Phi_{2}^{~1}$}
Suppose first $\Phi_a^{~a}\equiv 0$ for all $a$. Then Lemma~\ref{r1} and its analogues for $\a=2,\ldots,r$, we obtain
\begin{align}
\Phi&\equiv 0\\
\Theta&\equiv 0~\mod \phi.
\end{align}

Now assume that there exists $a$ such that $\Phi_a^{~a}\not\equiv 0$, say $a=1.$ Then $$\Phi_1^{~1}=\phi_1^{~1}$$ by Lemma~\ref{r1}
and let
\begin{equation}\Label{la}
\Phi_a^{~1}=\lambda_a\phi_1^{~1}~\text{mod}~
\{\phi_\alpha^{~\beta},  \a\ge 2 \text{ or } \b\ge2\}, \quad a\ge2,
\end{equation}
for some smooth functions $\lambda_a$, $a=2,\ldots,r'$.
Then \eqref{struc-phi} and its analogue for $M'$ together with Lemmas~\ref{r1} and ~\ref{r2} imply
\begin{equation}\Label{thaj}
\Theta_a^{~j}\wedge \theta_j^{~1}=\lambda_a\theta_1^{~j}\wedge\theta_j^{~1}~
\mod \theta_\alpha,~ \1{\theta_{\alpha}},~\alpha\ge2,
~\theta^-,~\1{\theta^-},~\phi, \quad a\ge2,
\end{equation}
where $\theta_\a$ is the span of all $\theta_\a^{~j}$.
%,and $\theta_u$ is the span of all $\theta_u^{~\b}$.

Then there exists a change of position (see Definition~\ref{changes}) that leaves $\Theta_{1}^{~J}$ invariant and replaces $\Theta_a^{~J}$
with $\Theta_a^{~J}-\lambda_a\Theta_1^{~J}$, $a\ge2$, (see the discussion after Definition~\ref{changes}).
The same change of position leaves $\Phi_{1}^{~1}$ invariant
and transforms $\Phi_{a}^{~1}$ into $\Phi_{a}^{~1}-\l_{a}\Phi_{1}^{~1}$ for $a\ge2$.
After performing such change of position, \eqref{la} becomes
$$\Phi_a^{~1}=0~\text{mod}~
\{\phi_\alpha^{~\beta} : \a\ge 2 \text{ or } \b\ge2\}, \quad a\ge2,$$
and \eqref{thaj} becomes
\begin{equation}
\Theta_a^{~j}\wedge \theta_j^{~1}+\Theta_a^{~u}\wedge\theta_u^{~1}=0~\mod \theta_\alpha,~ \1{\theta_{\alpha}},~\alpha\ge2,
~\theta^-,~\1{\theta^-},~\phi, \quad a\ge2.
\end{equation}
Since $\Theta_a^{~j}$, $\theta_u^{~1}$ are $(1,0)$ but $\Theta_a^{~u}$, $\theta_j^{~1}$ are $(0,1)$ and linearly independent,
it follows from Cartan's lemma together with Lemma~\ref{r2} that
\begin{equation}
\Theta_a^{~j}=\Theta_u^{~a}=0 \mod  \{\theta_\a,~\theta_v^{~\alpha} : \a\ge2\},\, \phi, \quad a\geq 2.
\end{equation}
Since $\Theta_a^{~j}$ are spanned by $\theta^+$ and $\Theta_u^{~a}$ are spanned by $\theta^-$, we conclude that
\begin{align}
\Theta_a^{~j}&=0 \mod  \{\theta_\a,: \a\ge2\},\, \phi, \quad a\geq 2\Label{theta-aj}\\
\Theta_u^{~a}&=0 \mod  \{\theta_v^{~\a} : \a\ge2\},\, \phi, \quad a\geq 2\Label{theta-ua}.
\end{align}

Next for each $a\ge2$, let
\begin{equation}\Label{lab}
\Phi_a^{~a}=\lambda_{a,\beta}\phi_1^{~\beta}
\mod\{\phi_\alpha^{~\g} : \alpha\geq 2\}
\end{equation}
for some
functions $\lambda_{a,\beta}$. Suppose  first that there
exists $a$ and $\beta$ such that $\lambda_{a,\beta}\neq 0$. We may assume $a=2$.
Using the identity
$$
d\Phi_2^{~2}=~\Theta_2^{~J}\wedge\Theta_J^{~2} \mod \Phi,~\Theta^-, $$
together with \eqref{theta-aj} and \eqref{struc-phi} we obtain
\begin{equation}\Label{2a}
\sum_{J=n+1}^{n'} \Theta_2^{~J}\wedge \Theta^{~2}_J=\lambda_{2,\beta}~ \theta_1^{~j}\wedge\theta_j^{~\beta} \mod \{\theta_\a,~\theta^-: \a\ge2\},\phi,
\end{equation}
where $\l_{2,\b}\ne0$ for some fixed $\b$.
On the left-hand side we have a linear combination of $n'-n$ $(1,0)$ forms,
whereas on the right-hand side we have a linear combination of at least $n$ linear independent $(1,0)$ forms with nonzero coefficients.
Since $n'-n<n$, Cartan's lemma implies that this is impossible.
Hence we have $\l_{a,\b}=0$ for all $a\ge2$ and all $\b$ and therefore \eqref{lab} implies
$$\Phi_a^{~a}=0~\text{   mod
}\{\phi_\alpha^{~\beta} : \alpha\ge 2\}, \quad a\ge2.$$
Since $\Phi_{a}^{~b}$ and $\phi_{\a}^{~\b}$ are antihermitian, we also have
$$\Phi_a^{~a}=0~\text{   mod
}\{\phi_\alpha^{~\beta} : \b\ge 2\}, \quad a\ge2,$$
and hence
\begin{equation}\Label{phi-aa}
\Phi_a^{~a}=0~\text{   mod
}\{\phi_\alpha^{~\beta}: \a,\b\ge2 \}, \quad a\ge 2.
\end{equation}

Now \eqref{struc-phi}, \eqref{theta-aj}, \eqref{theta-ua} and Lemma~\ref{r2} imply
\begin{equation}\Label{aa}
\sum_{J=n+1}^{n'} \Theta_a^{~J}\wedge \1{\Theta_{a}^{~J}}+\sum_{U=q-r+1}^{q'-r'}\1{\Theta_a^{~U}}\wedge\Theta_a^{~U}=0~\text{   mod   }\{\theta_\a , \theta_u^{~\alpha}: \a\ge2\},\phi, \quad a\ge2,
\end{equation}
which in view of Lemma~\ref{r2} and positivity of the left-hand side implies
\begin{align}
\Theta^{~J}_a&=0 ~\text{   mod   }\{\theta_\a : \a\ge2\},\phi,
\quad a\ge2,\, J>n,\Label{aj}\\
\Theta_U^{~a}&=0 ~\text{   mod   }\{\theta_u^{~\a} : \a\ge2\},\phi,
\quad a\ge2,\, U>q-r\Label{ua}.
\end{align}
Together with \eqref{theta-aj} and \eqref{theta-ua} this yields
\begin{align}
\Theta_a^{~J}&=0 \mod  \{\theta_\a : \a\ge2\},\, \phi, \quad a\geq 2,\Label{theta-aJ}\\
\Theta_U^{~a}&=0 ~\text{   mod   }\{\theta_u^{~\a} : \a\ge2\},\phi,
\quad a\ge2\Label{theta-Ua}.
\end{align}

Now we redo our procedure for $\Phi_{a}^{~b}$. We can write
\begin{equation}\Label{phi-ab'}
\Phi_{a}^{~b}= \l_{a}^{~b}{}_{\b}^{~\a} \phi_{\a}^{~\b}
\end{equation}
for which \eqref{struc-phi} yields
\begin{equation}\Label{lambda-a}
\Theta_a^{~J}\wedge \Theta_J^{~b}+\Theta_a^{~U}\wedge \Theta_U^{~b}=
\l_{a}^{~b}{}_{\b}^{~\a}( \theta_\a^{~j}\wedge\theta_j^{~\b}+\theta_\a^{~u}\wedge\theta_u^{~\b})~
\text{mod}~ \phi.
\end{equation}
Then substituting \eqref{theta-aJ} we obtain
\begin{equation}
\l_{a}^{~b}{}_{\b}^{~\a} \theta_\a^{~j}\wedge\theta_j^{~\b} =0
\mod \{\theta_\g^{~k}\wedge\theta_{l}^{~\delta} ,
~\theta_\g^{~u}\wedge \theta_v^{~\delta} : \g,\delta\ge 2\},\phi, \quad a,b\ge2,
\end{equation}
which implies
\begin{equation}
\l_{a}^{~b}{}_{1}^{~\a} = \l_{a}^{~b}{}_{\b}^{~1} = 0, \quad a,b\ge 2.
\end{equation}
Hence \eqref{phi-ab'} yields
\begin{equation}
\Phi_{a}^{~b}= 0 \mod \{\phi_\alpha^{~\beta}: \a,\b\ge2 \}, \quad a,b\ge 2.
\end{equation}
Summarizing we obtain the following:
\begin{align}
\Phi_a^{~1}&=0 \mod
\{\phi_\alpha^{~\beta} : \a\ge 2 \text{ or } \b\ge2\}, \quad a\ge2, \\
\Phi_{a}^{~b}&= 0 \mod \{\phi_\alpha^{~\beta}: \a,\b\ge2 \}, \quad a,b\ge 2,\\
\Theta^{~J}_a &=0 \mod \{\theta_\a: \a\ge2\},\phi, \quad a\ge2,\\
\Theta_U^{~a}&=0 \mod \{\theta_u^{~\a}: \a\ge2,~u>r\},\phi.\Label{before-repeat}
\end{align}

Now repeat the argument from the beginning of this section and assume first that
$\Phi_{a}^{~a}=0$ for all $a\ge 2$.
We obtain
$$\Theta_{a}^{~J}=\Theta_U^{~a}=0 \mod \phi, \quad a\ge2,$$
and hence $d\Phi_a^{~b}$ vanishes on the kernel of all $\theta_{1}^{~j}$, $\theta_u^{~1}$ and $\phi_{\a}^{\b}$.
In this case %\eqref{aj} and \eqref{ua} imply
\eqref{phi-ab'} and \eqref{lambda-a} imply
$$\Phi_a^{~b}=0, \quad a>1\quad\text{or}\quad b>1.$$
%and
%$$\Theta^{~J}_a=\Theta_U^{~a}=0 \mod \phi, \quad a>1.$$

In the remaining case, we assume that $\Phi_{a}^{~a}\ne 0$ for some $a$, say $a=2$.
Then \eqref{phi-aa} implies that, after a change of position as before,
we may assume that
$$\Phi_{2}^{~2}=\sum_{\a\ge 2} c_{\a} \phi_{\a}^{~\a}$$
for some $c_{\a}\ge0$ not all zero.
Then \eqref{struc-phi} yields
\begin{equation}\label{two theta1}
\Theta_2^{~J}\wedge \Theta_J^{~2}+\Theta_2^{~U}\wedge\Theta_U^{~2}=\sum_{\alpha\ge2} c_\alpha\left(\theta_\alpha^{~j}\wedge\theta_j^{~\alpha}+\theta_\alpha^{~u}\wedge\theta_u^{~\a}\right)
~\text{   mod   }~\phi.
\end{equation}
Since the proof of Lemma~\ref{r1} can be repeated for $\Phi_{2}^{~2}$
instead of $\Phi_{1}^{~1}$, we conclude that
the rank of the left-hand side of \eqref{two theta1} restricted
to $T^1$ is $n$.
Therefore, in the right-hand side, only one $c_{\a}$, say $c_{2}$ can be different from zero. After a dilation (see Definition~\ref{changes}), we may assume
$$\Phi_{2}^{~2}= \phi_{2}^{~2}$$
and hence
\begin{equation}\Label{theta2}
\sum_{J}\T_{2}^{~J}\wedge \1{\T_{2}^{~J}}+\sum_U \1{\Theta_U^{~2}}\wedge\Theta_U^{~2} = \sum_{j}\theta_{2}^{~j}\wedge \1{\theta_{2}^{~j}}+\sum_u \1{\theta_u^{~2}}\wedge\theta_u^{~2}
\mod\phi.
\end{equation}

We claim that each $\T_{2}^{~J}$ and $\Theta_U^{~2}$ is a linear combination of only $\theta_{2}^{~j}$ and $\theta_u^{~2}$ modulo $\phi$. Indeed, if $\T_{2}^{~J}$ were a combination of $\theta_{\a}^{~j}$ modulo $\phi$, where some of them
enters with a nonzero coefficient $\l_{\a}$ with $\a\ne 2$,
we would have $\theta_{\a}^{~j}\wedge \1{\theta_{\a}^{~j}}$
entering with positive coefficient
$\ge\l_{\a}\1{\l_{\a}}$ in the right-hand side of \eqref{theta2}, which is impossible. Similar argument for $\Theta_U^{~2}$ proves our claim.
As in the proof of Lemma~\ref{r1} we now write
\begin{equation}\Label{teta2}
\Theta_2^{~J}=h^{J}_{~j}\theta_2^{~j}~\text{   mod   }~\phi.
\end{equation}
Since
\begin{equation}\Label{tet2}
\Phi_{2}^{~1}= \l_{\b}^{~\a} \phi_{\a}^{~\b}
\end{equation}
for suitable $\l_{\a}^{~\b}$, we obtain
\begin{equation}
\T_{2}^{~J}\wedge \T_{J}^{~1} = \l_{\b}^{~\a} \theta_{\a}^{~j} \wedge
\theta_{j}^{~\b} \mod \theta_\g^{~u}\wedge\theta_v^{~\delta}, \phi,
\end{equation}
which in view of \eqref{teta2} and Lemma~\ref{r1}, yields
\begin{equation}\Label{h-id}
h^{k}_{~j}\theta_2^{~j} \wedge \theta_{k}^{~1} = \l_{\b}^{~\a} \theta_{\a}^{~j} \wedge
\theta_{j}^{~\b} \mod \theta_\g^{~u}\wedge\theta_v^{~\delta}, \phi.
\end{equation}
Since the right-hand side contains no terms
$\theta_{\a}^{~j} \wedge
\theta_{k}^{~\b}$ with $j\ne k$, it follows that
$h^{k}_{~j}=0$ for $j\ne k$ and hence $h^{j}_{~j}=\l_{2}^{~1}=:\l$ for all $j$
and $\l_{\b}^{~\a}=0$ for $(\a,\b)\ne(1,2)$.
Then \eqref{teta2} implies
\begin{equation}\Label{teta3}
\Theta_2^{~j}=\l \theta_2^{~j}~\text{   mod   }~\phi.
\end{equation}
Finally, substituting \eqref{teta3} into \eqref{theta2} and identifying coefficients we obtain
$$ \l\bar\l \delta_{ij} + \sum_{J>n} h_{~i}^{J}\1{h_{~j}^{J}}
=\delta_{ij}.$$
In particular, it follows that the vectors $h_{i}:=(h^{n+1}_{~i},\ldots,h^{n'}_{~i})$ are orthogonal and of the same length.
But since we have assumed $n'-n<n$, we must have $h_{i}=0$
and therefore $|\l|=1$. Now we perform a change of position
as in Definition~\ref{changes} with $W_{\a}^{~\b}:=c_{\a}\delta_{\a\b}$ with
$c_{\a}=1$ for $\a\ne2$ and $c_{2}=\l$.
Then we arrive at the following relations:
\begin{equation}
\Phi_{a}^{~a}=\phi_{a}^{~a}, \quad a=1,2,
\end{equation}
\begin{equation}
 \T_{a}^{~J}=\theta_{a}^{~J} \mod \phi, \quad a=1,2.
\end{equation}

Since after the last change of position, we have $\l=1$ in \eqref{teta3},
we obtain from \eqref{h-id} that $\l_{\b}^{~\a}=0$ unless $\a=2$ and $\b=1$,
in which case $\l_{2}^{~1}=1$. Then substituting into \eqref{tet2} yields
\begin{equation}
\Phi_{2}^{~1}=\phi_{2}^{~1}.
\end{equation}
Finally \eqref{struc-phi}, \eqref{g} and Lemma~\ref{r1} imply
\begin{equation}
\Theta_U^{~a}=\theta_U^{~a} \mod \phi, \quad a=1,2.
\end{equation}

\subsection{Determination of $\Phi_{a}^{~b}$}
Now we repeat again the arguments
%of this section following the lines after \eqref{before-repeat},
after the proof of Lemma~\ref{r1},
where we replace $1$ by $2$ and $2$ by $3$, to arrive at the identities:
\begin{align}
\Phi_a^{~2}&=0 \mod
\{\phi_\alpha^{~\beta} : \a\ge 3 \text{ or } \b\ge3\}, \quad a\ge3, \\
\Phi_{a}^{~b}&= 0 \mod \{\phi_\alpha^{~\beta}: \a,\b\ge3 \}, \quad a,b\ge 3,\\
\Theta^{~J}_a &=0 ~\text{   mod   } \{\theta_\a : \a\ge 3\},\,\phi, \quad a\ge3,\\
\Theta_U^{~a} &=0 \mod \{\theta_u^{~\a}: \a\ge 3, u>r\},\phi, \quad a\ge 3.
\end{align}
Then continuing following the arguments after \eqref{before-repeat}
with the same replacements, we obtain
\begin{equation}
\Phi_{3}^{~\a}=\phi_{3}^{~\a},\quad \a=1,2,3,
\end{equation}
\begin{equation}
 \T_{3}^{~J}-\theta_{3}^{~J}= \Theta_U^{~3}-\theta_U^{~3}=0 \mod \phi.
\end{equation}

Finally, arguing by induction on $b=4,\ldots,q'$, and proceeding
by repeating the same arguments with
$1$ replaced by $b$ and $2$ by $b+1$,
we obtain the following lemma.

\begin{lemma}\Label{eds}
For any local CR mapping $f$ from $S_{p,q,r}$ into $S_{p',q',r'}$,
there exist an integer $s\leq \min(r, r')$ and a choice of sections of the bundles
$\6B_{p,q,r}\to S_{p,q,r}$ and $\6B_{p',q',r'}\to S_{p',q',r'}$
such that the pulled back forms satisfy
\begin{align*}
\Phi_a^{~b}-\tilde\phi_a^{~b}&=0,\\
\Theta_a^{~J}-\tilde\theta_a^{~J}&=\Theta_U^{~a}-\tilde\theta_U^{~a}=0 \mod\phi ,
\end{align*}
where
\begin{align}
\tilde\phi_a^{~b}&:=\phi_a^{~b}\quad \text{if}\quad a, b\leq s,\\
\tilde\theta_a^{~J}&:=\theta_a^{~J}\quad\text{if}\quad a\leq s, ~J\leq n,\\
\tilde\theta_U^{~a}&:=\theta_U^{~a}\quad\text{if}\quad a\leq s,~U\leq q-r,
\end{align}
and $0$ otherwise.
\end{lemma}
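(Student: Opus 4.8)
The plan is to prove the lemma by induction on the diagonal index of the target, iterating the normalizations already performed in the three preceding subsections. Working near a generic point --- so that the ranks of the coefficient matrices that occur are locally constant --- I would establish, by induction on $k\ge0$, that after a suitable choice of sections of the adapted frame bundles over $S_{p,q,r}$ and $S_{p',q',r'}$ one of the following holds: (a) $f$ is already in the asserted normal form with $s=k$; or (b) there is a choice of sections for which $\Phi_a^{~b}=\phi_a^{~b}$, $\Theta_a^{~J}=\tilde\theta_a^{~J}\mod\phi$ and $\Theta_U^{~a}=\tilde\theta_U^{~a}\mod\phi$ hold for all $a,b\le k$ (with $\tilde\theta$ as in the statement), while the remaining forms are confined to the connection forms of $M$ with large indices --- precisely, $\Phi_a^{~b}$ is a combination of $\phi_\a^{~\b}$ with $\max(\a,\b)>k$ whenever $\max(a,b)>k$ and with $\min(\a,\b)>k$ whenever $\min(a,b)>k$, and, modulo $\phi$, $\Theta_a^{~J}$ (resp.\ $\Theta_U^{~a}$) is a combination of $\theta_\a^{~j}$ (resp.\ $\theta_u^{~\a}$) with $\a>k$ whenever $a>k$. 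The first steps of this induction are exactly the explicit computations of the subsections ``Determination of $\Phi_1^{~1}$'' and ``Determination of $\Phi_2^{~2}$ and $\Phi_2^{~1}$''.

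The base case $k=0$ is Lemmas~\ref{r1} and \ref{r2}: if $\Phi_a^{~a}\equiv0$ for all $a$, alternative (a) holds with $s=0$; otherwise Lemma~\ref{r2} already confines the $\Theta_a^{~J}$ and $\Theta_U^{~a}$ to $T^+$ and $T^-$, which is the $k=0$ instance of (b). For the inductive step, assume (b) at level $k$ and inspect the sub-block $(\Phi_a^{~b})_{a,b>k}$. If every $\Phi_a^{~a}$ with $a>k$ vanishes modulo the forms with indices $>k$, then --- exactly as in the case ``$\Phi_a^{~a}\equiv0$ for all $a$'' treated above --- differentiation and \eqref{struc-phi} give $\Theta_a^{~J}=\Theta_U^{~a}=0\mod\phi$ for $a>k$, so $d\Phi_a^{~b}$ vanishes on the common kernel of $\phi$ and of the $\theta_\g^{~j},\theta_u^{~\g}$ with $\g\le k$, whence \eqref{phi-ab'} and \eqref{lambda-a} force $\Phi_a^{~b}=0$ whenever $a>k$ or $b>k$; this is alternative (a) with $s=k$. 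If instead some $\Phi_a^{~a}$ with $a>k$ is not identically zero, relabel so that $a=k+1$ and rerun the proof of Lemma~\ref{r1} on this sub-block: the inequality $q'-r'<p-r$ makes the analogues of the vectors $\eta_\a^{~j}$ linearly dependent, killing the ``$g$'' and ``$\eta$'' coefficients via the analogues of \eqref{cartan} and \eqref{cartan1}, and the inequality $p'-r'<2(p-r)$, i.e.\ $n'<2n$, then forces the surplus ``$h$''-vectors to vanish in the analogue of \eqref{h} after a unitary rotation. One arrives at $\Phi_{k+1}^{~k+1}=\phi_{k+1}^{~k+1}$, $\Theta_{k+1}^{~J}=\theta_{k+1}^{~J}\mod\phi$ for $J\le n$, and $\Theta_U^{~k+1}=\theta_U^{~k+1}\mod\phi$. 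A change of position as in Definition~\ref{changes} (of the kind discussed right after that definition) then replaces $\Phi_a^{~k+1}$ by $\Phi_a^{~k+1}-\lambda_a\Phi_{k+1}^{~k+1}$ and $\Theta_a^{~J}$ by $\Theta_a^{~J}-\lambda_a\Theta_{k+1}^{~J}$ for $a>k+1$, killing the $\phi_{k+1}^{~k+1}$-component; substituting back into \eqref{struc-phi}, \eqref{struc-theta1}, \eqref{struc-theta2} and applying Cartan's lemma together with Lemma~\ref{r2} and the inequalities \eqref{main-ineq} confines all remaining forms to indices $>k+1$, exactly as in the chain of relations \eqref{theta-aj}--\eqref{before-repeat}. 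This is alternative (b) at level $k+1$.

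Since each successful step normalizes one further diagonal source form $\phi_{k+1}^{~k+1}$ --- which requires $k+1\le r$ --- matched to the target form $\Phi_{k+1}^{~k+1}$ --- which requires $k+1\le r'$ --- the induction must terminate in alternative (a) for some $s\le\min(r,r')$, and that is the assertion of Lemma~\ref{eds}. The main obstacle is the bookkeeping: one must check at every level that the confinement clause of (b) survives each change of position and each application of Cartan's lemma --- i.e.\ that the not-yet-normalized forms never re-acquire a component with a small index --- and that the two inequalities of \eqref{main-ineq} are used in precisely the right places, namely $q'-r'<p-r$ to annihilate the $T^-$-type contributions through \eqref{cartan} and \eqref{cartan1}, and $n'<2n$ to annihilate the surplus $h$-vectors through the orthonormality argument following \eqref{h}. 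Everything else is a verbatim repetition of the computations already carried out for the first few values of $k$.
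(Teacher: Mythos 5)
Your proposal is correct and follows essentially the same route as the paper: the paper's own proof of Lemma~\ref{eds} is exactly this induction on the diagonal index, carried out explicitly for the first two steps in the subsections on $\Phi_1^{~1}$ (Lemmas~\ref{r1} and \ref{r2}) and on $\Phi_2^{~2}$, $\Phi_2^{~1}$, and then iterated ``with $1$ replaced by $b$ and $2$ by $b+1$,'' with the two inequalities of \eqref{main-ineq} used in precisely the places you identify. Your explicit inductive confinement hypothesis (your alternative (b)) is just a more structured packaging of what the paper leaves implicit in the phrase ``repeating the same arguments.''
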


%
%\begin{remark}\Label{back1}
%A change of section of $\6B_{p,q}\to S_{p,q}$
%(corresponding to a change of frame on $S_{p,q}$)
%has been used in course of the proof.
%However, once Lemma~\ref{eds} has been established,
%one can change the frame on $S_{p,q}$ back to the original one
%together with the corresponding change of the frame on $S_{p',q'}$
%involving only the subframe $(Z_a, X_J, Y_b)$ with $a,b\le q, J\le n$,
%such that the conclusion of the lemma remains valid.
%\end{remark}

\section{Determination of $\Theta$.}

Our next goal is to determine $\Theta_{a}^{~J}$ and $ \Theta_U^{~a}$.
It will be determined together with
components $\Psi$, $\Delta$
and $\Omega$ modulo $\phi$.
In view of Lemma~\ref{eds} we can write
\begin{align}
\Theta_a^{~J}-\tilde\theta_a^{~J}&=
\eta_{a}^{~J}{}_{\b}^{~\g}   \phi_\g^{~\b},\Label{eq1}\\
\Theta_U^{~a}-\tilde\theta_U^{~a}&=
\eta_{U}^{~a}{}_{\b}^{~\g}   \phi_\g^{~\b},\Label{eq2}
\end{align}
for some $\eta_{a}^{~J}{}_{\b}^{~\g},~\eta_{U}^{~a}{}_{\b}^{~\g}$.
% and using the symmetry relations
%\eqref{sym-rel},
%\begin{align}
%\Theta_J^{~a}-\tilde\theta_J^{~a}&=
%\eta_{J}^{~a}{}_{\g}^{~\b}   \phi_\b^{~\g},\Label{eq1'}\\
%\Theta_a^{~U}-\tilde\theta_a^{~U}&=
%\eta_{a}^{~U}{}_{\g}^{~\b}   \phi_\b^{~\g},\Label{eq2'}
%\end{align}
%where
%\begin{equation}
%\eta_{J}^{~a}{}_{\g}^{~\b}:= \1{\eta_{a}^{~J}{}_{\b}^{~\g}},\quad \eta_{a}^{~U}{}_{\g}^{~\b}:= \1{\eta_{U}^{~a}{}_{\b}^{~\g}}.
%\end{equation}

\subsection{Determination of $\Theta_{a}^{~J}, \Theta_U^{~a}$ for $a>s$.}
In case $a>s$, differentiating \eqref{eq1}, \eqref{eq2} and using the structure equations $d\pi=\pi\wedge\pi$ for $M'$, we obtain
\begin{align}
\eta_{a}^{~J}{}_{\b}^{~\g}\left(\theta_\gamma^{~v}\wedge\theta_v^{~\b}+
\theta_\gamma^{~k}\wedge\theta_k^{~\beta}\right)
&=
\Psi_a^{~b} \wedge\tilde\theta_b^{~J}
 \mod \phi, \Label{dtheta1-1}\\
\eta_{U}^{~a}{}_{\b}^{~\g}\left(\theta_\gamma^{~v}\wedge\theta_v^{~\b}+
\theta_\gamma^{~k}\wedge\theta_k^{~\beta}\right)
&=\tilde\theta_U^{~b}\wedge \hat\Psi_b^{~a}
 \mod \phi. \Label{dtheta1-2}
\end{align}

If $J>n$ and $U>q-r$, the right-hand sides of \eqref{dtheta1-1} and \eqref{dtheta1-2} are zero.
Since the forms $\theta_{\g}^{~k},~\theta_u^{~\b}$ and $\theta_{k}^{~\b},~\theta_\g^{~u}$
are $(1,0)$ and $(0,1)$ respectively, and are linearly independent,
 we conclude
\begin{equation*}
\eta_{a}^{~J}{}_{\b}^{~\g}=\eta_{U}^{~a}{}_{\b}^{~\g}=0, \quad a>s, \ J>n, \ U>q-r,
\end{equation*}
and hence \eqref{eq1} and \eqref{eq2} yield
\begin{equation}\Label{theta4}
\Theta_a^{~J}=\Theta_U^{~a}=0,  \quad a>s, \ J>n, \ U>q-r.
\end{equation}

For $J=j\leq n$ and $U=u\leq q-r$, \eqref{dtheta1-1} and \eqref{dtheta1-2} take the form
\begin{align}
\eta_{a}^{~j}{}_{\b}^{~\g}\left(\theta_\gamma^{~v}\wedge\theta_v^{~\b}+
\theta_\gamma^{~k}\wedge\theta_k^{~\beta}\right)
&=
\sum_{b\leq s}\Psi_a^{~b} \wedge \theta_b^{~j}
 \mod \phi, \Label{dtheta1-1'}\\
\eta_{u}^{~a}{}_{\b}^{~\g}\left(\theta_\gamma^{~v}\wedge\theta_v^{~\b}+
\theta_\gamma^{~k}\wedge\theta_k^{~\beta}\right)
&=\sum_{b\leq s}\theta_u^{~b}\wedge \hat\Psi_b^{~a}
 \mod \phi. \Label{dtheta1-2'}
\end{align}
Since the forms $\theta_{\g}^{~k},~\theta_v^{~\b}$ both appear on the left-hand side of \eqref{dtheta1-1'} while only $\theta_\b^{~j}$ appears on the right-hand side of \eqref{dtheta1-1'}, we obtain
$$\eta_{a}^{~j}{}_{\b}^{~\g}=0.$$
Similar argument for \eqref{dtheta1-2'} yields
$$\eta_{u}^{~a}{}_{\b}^{~\g}=0.$$
Hence by \eqref{eq1} and \eqref{eq2}, we obtain
\begin{equation}\Label{theta4'}
\Theta_a^{~j}=\Theta_u^{~a}=0,  \quad a>s.
\end{equation}
Furthermore by substituting $\eta_{a}^{~j}{}_{\b}^{~\g}=\eta_{u}^{~a}{}_{\b}^{~\g}=0$, \eqref{dtheta1-1'} and \eqref{dtheta1-2'} yield
$$\Psi_a^{~b}=0 \mod ~\{\theta_\b^{~j},\phi\},~\quad \text{ if }a>s \text{ and }~b\leq s$$
and
$$\hat\Psi_b^{~a}=0 \mod ~\{\theta_u^{~\b},\phi\},~\quad \text{ if }a>s \text{ and }~b\leq s.$$
Then by symmetry relation for $\Psi$ and $\hat\Psi$, we obtain
\begin{equation}\Label{psi1}
\Psi_a^{~b}=0 \mod ~\phi,~\quad \text{ if }a>s \text{ and }~b\leq s.
\end{equation}

\subsection{Reducing the freedom for $\Theta_{a}^{~J},~\Theta_U^{~a}$ for $a\leq s$}
In case $a=\a\leq s$, differentiating \eqref{eq1}, \eqref{eq2} and using the structure equations $d\pi=\pi\wedge\pi$ for both $M$ and $M'$, we obtain
\begin{align}
&\eta_{\a}^{~J}{}_{\b}^{~\g}\left(\theta_\gamma^{~v}\wedge\theta_v^{~\b}+
\theta_\gamma^{~k}\wedge\theta_k^{~\beta}\right)
+\sum_{\b>s}\psi_\a^{~\b}\wedge\theta_\b^{~J}\nonumber\\
=&
\sum_{\b\leq s}\left(\Psi_\a^{~\b} -\psi_\a^{~\b}\right)\wedge\theta_\b^{~J}+\theta_\a^{~v}\wedge\left(\Delta_v^{~J}-\delta_v^{~J}\right)
+\theta_\a^{~k}\wedge \left(\Omega_k^{~J}-\omega_k^{~J}\right)
 \mod \phi, \Label{dtheta2-1}\\
&\eta_{U}^{~\a}{}_{\b}^{~\g}\left(\theta_\gamma^{~v}\wedge\theta_v^{~\b}+
\theta_\gamma^{~k}\wedge\theta_k^{~\beta}\right)+\sum_{\b>s}\theta_U^{~\b}\wedge\hat\psi_\b^{~\a}\nonumber\\
=&\left(\Omega_U^{~v}-\omega_U^{~v}\right) \wedge\theta_v^{~\a}+
\left(\Delta_U^{~k}-\delta_U^{~k}\right)\wedge\theta_k^{~\a}
+\sum_{\b\leq s}\theta_U^{~\b}\wedge\left( \hat\Psi_\b^{~\a}-\hat\psi_\b^{~\a}\right)
 \mod \phi. \Label{dtheta2-2}
\end{align}
Since the forms $\theta_{\g}^{~u}$ and $\theta_{\b}^{~k}$
are $(0,1)$ and $(1,0)$ respectively and are linearly independent,
the terms $\theta_\gamma^{~v}\wedge\theta_v^{~\beta}$, $\g\ne \a$, in the left-hand of \eqref{dtheta2-1} side cannot occur in the right-hand side. Therefore
\begin{equation}\Label{ag}
\eta_{\a~\beta}^{~J~\gamma}=0~\text{   if   }~\g\ne \a
\end{equation}
and hence \eqref{eq1} becomes
\begin{equation}\Label{eq5-1}
\Theta_\a^{~J} - \tilde \theta_\a^{~J}=\eta_{\alpha~\beta}^{~J}\phi_\alpha^{~\beta}, \quad \a\leq s,
\end{equation}
where
$$\eta_{\alpha~\beta}^{~J}:=\eta_{\alpha~\beta}^{~J~\a}.$$
Similar argument for $\theta_\gamma^{~k}\wedge\theta_k^{~\beta}$ in \eqref{dtheta2-2} yields
$$
\eta_{U~\beta}^{~\a~\gamma}=0~\text{   if   }~\b\ne \a$$
and hence \eqref{eq2} becomes
\begin{equation}\Label{eq5-2}
\Theta_U^{~\a} -\tilde\theta_U^{~\a} =\eta_{U}^{~\a\g}\phi_\g^{~\a}, \quad \a\leq s,
\end{equation}
where
$$\eta_{U}^{~\a\g}:=\eta_{U~\a}^{~\a~\g}.$$

Now if $J>n$ and $U>q-r$, then \eqref{dtheta2-1} and \eqref{dtheta2-2} become
\begin{equation}\Label{nonum1}
\eta_{\a~\b}^{~J}\left(\theta_\a^{~v}\wedge\theta_v^{~\b}+
\theta_\a^{~k}\wedge\theta_k^{~\beta}\right)=
\theta_\a^{~v}\wedge\Delta_v^{~J}+\theta_\alpha^{~k}\wedge\Omega_k^{~J} \mod \phi,
\end{equation}
i.e.\
\begin{equation}\Label{nonum1'}
\theta_\a^{~v}\wedge \left(\Delta_v^{~J}-\eta_{\a~\b}^{~J}\theta_v^{~\beta}
\right) + \theta_\alpha^{~k}\wedge \left(
\Omega_k^{~J}-\eta_{\a~\b}^{~J}\theta_k^{~\beta}
\right) =0\mod \phi,
\end{equation}
and
\begin{equation}\Label{nonum2}
\eta_{U}^{~\a\g}\left(\theta_\g^{~v}\wedge\theta_v^{~\a}+
\theta_\g^{~k}\wedge\theta_k^{~\a}\right)=\Omega_U^{~v} \wedge\theta_v^{~\a}+
\Delta_U^{~k}\wedge\theta_k^{~\a}
 \mod \phi,
\end{equation}
i.e.\
\begin{equation}\Label{nonum2'}
 \left(\Omega_U^{~v}-\eta_{U}^{~\a\g}\theta_\g^{~v}
\right)\wedge\theta_v^{~\a} +\left(\Delta_U^{~k}-\eta_{U}^{~\a\g}\theta_\g^{~k}
\right) \wedge\theta_k^{~\a}=0\mod \phi.
\end{equation}
Thus  using linear independence of $\theta_{\a}^{~k}$, $\theta_v^{~\a}$ and applying Cartan's Lemma, we obtain for each $\a$ the identities
\begin{align}
\Delta_v^{~J}&=\eta_{\a~\b}^{~J}\theta_v^{~\b}
\mod\{\phi, ~\theta_\alpha^{~+}, ~\theta_\a^{~-}\}, \quad J>n,
\Label{deltaJ}\\
\Omega_k^{~J}&=\eta_{\alpha~\beta}^{~J}\theta_k^{~\beta}\mod
\{\phi, ~\theta_\alpha^{~+}, ~\theta_\a^{~-}\}, \quad J>n, \Label{omegaJ}
\end{align}
and
\begin{align}
\Delta_U^{~k}&=\eta_{U}^{~\a\g}\theta_\g^{~k}
\mod\{\phi, ~\1{\theta_{\alpha}^{~+}}, ~\1{\theta_{\a}^{~-}}\}, \quad U>q-r,
\Label{deltaU}\\
\Omega_U^{~v}&=\eta_U^{~\a\g}\theta_\g^{~v}
\mod\{\phi, ~\1{\theta_{\alpha}^{~+}}, ~\1{\theta_{\a}^{~-}}\}, \quad U>q-r,
\Label{omegaU}
\end{align}
where $\theta_\a^{~+}$, $\theta_\a^{~-}$ are spans of $\theta_\a^{~k}$ and $\theta_\a^{~v}$ respectively.
Since $\Delta_v^{~J}$, $\Delta_U^{~k}$ are $(1,0)$ forms, $\theta_\a^{~v}$, $\theta_k^{~\a}$ are $(0,1)$, and $\Delta_v^{~J}$, $\Delta_U^{~k}$ and $\Omega_k^{~J}$, $\Omega_U^{~v}$ are independent of $\alpha$, we obtain
\begin{align}
\Delta_v^{~J}&=\eta_{\b}^{~J}\theta_v^{~\b}
\mod\{\phi, ~\theta_\alpha^{~+}  \}, \quad J>n,
\Label{deltaJ'}\\
\Omega_k^{~J}&=\eta_{\beta}^{~J}\theta_k^{~\beta}
\mod\{\phi, ~\theta_\alpha^{~+}, ~\theta_\a^{~-}\}, \quad J>n, \Label{omegaJ'}\\
\Delta_U^{~k}&=\eta_{U}^{~\g}\theta_\g^{~k}
\mod\{\phi,~\1{\theta_\a^{~-}} \}, \quad U>q-r,
\Label{deltaU'},\\
\Omega_U^{~v}&=\eta_U^{~\g}\theta_\g^{~v}
\mod\{\phi, ~\1{\theta_\alpha^{~+}}, ~\1{\theta_\a^{~-}}\}, \quad U>q-r.
\Label{omegaU'}
\end{align}
where
$$\eta_{\b}^{~J}=\eta_{\a~\b}^{~J}, \quad \eta_{U}^{~\g}=\eta_{U}^{\a~\g}$$
and \eqref{eq5-1} and \eqref{eq5-2} become
\begin{align}
\Theta_\alpha^{~J}&=\eta_{\beta}^{~J}\phi_\alpha^{~\beta}, \quad J>n,\Label{thetaJ2}\\
\Theta_U^{~\a}&=\eta_{U}^{~\g}\phi_\g^{~\a}, \quad U>q-r.\Label{thetaU2}
\end{align}

If on the other hand, $J=j\leq n$ and $U=u\leq q-r$, then \eqref{dtheta2-1} together with \eqref{ag} yields
\begin{equation}\Label{nonum3}
\theta_\a^{~v}\wedge \left(\Delta_v^{~j}-\delta_v^{~j}-\eta_{\a~\b}^{~j}\theta_v^{~\beta}
\right)=0\mod \phi, \theta^+,
\end{equation}
and \eqref{dtheta2-2} yields
\begin{equation}\Label{nonum4}
\left(\Delta_u^{~k}-\delta_u^{~k}-\eta_{u}^{~\a\g}\theta_\g^{~k}
\right) \wedge\theta_k^{~\a}=0\mod \phi,\theta^-.
\end{equation}
Then  using linear independence of $\theta_{\a}^{~k}$, $\theta_v^{~\a}$ and applying Cartan's Lemma, we obtain
$$\Delta_v^{~j}=\delta_v^{~j}+\eta_{\a~\b}^{~j}\theta_v^{~\b}\mod\{\phi, ~\theta^+\}$$
and
$$\Delta_u^{~k}=\delta_u^{~k}+\eta_{u}^{~\a\g}\theta_\g^{~k}\mod\{\phi, ~\theta_- \}.$$
Since $\Delta_v^{~j}$ is independent of $\alpha$, we obtain
\begin{equation}\Label{delta}
\Delta_v^{~j}=\delta_v^{~j}+\eta_{\b}^{~j}\theta_v^{~\b}+\eta_v^{~\g}\theta_\g^{~j}\mod \phi,
\end{equation}
where
$$\eta_{\b}^{~j}=\eta_{\a~\b}^{~j}, \quad \eta_{v}^{~\g}=\eta_{v}^{~\a\g}.$$
Hence \eqref{eq5-1} and \eqref{eq5-2} imply
\begin{align*}
\Theta_\alpha^{~j}&=\theta_\a^{~j}+\eta_{\beta}^{~j}\phi_\alpha^{~\beta}, \\
\Theta_u^{~\a}&=\theta_u^{~\a}+\eta_{u}^{~\g}\phi_\g^{~\a}.
\end{align*}
Then after applying change of frame of the source manifold given by
\begin{align*}
\widetilde Z_\a &=Z_\a,\\
\widetilde Z'_u &=\eta_u^{~\b}Z_\b+Z'_u,\\
\widetilde X_j &=X_j+C_j^{~\b}Z_\b,\\
\widetilde Y_\a &=Y_\a+A_\a^{~\b}Z_\b+V_\a^{~v}Z'_v+\eta_\a^{~j}X_j,
\end{align*}
where
$$\eta_v^{~\b}+V_v^{~\b}=0,$$
$$(A_\a^{~\b}+\overline{A_\b^{~\a}})-V_\a^{~v}V_v^{~\b}+\eta_\a^{~j}\eta_j^{~\b}=0,$$
$$C_j^{~\a}+\eta_j^{~\a}=0,$$
we can choose new $\theta_\a^{~j},~\theta_u^{~\a}$ and $\delta_u^{~j}$ such that
\begin{align}
\Theta_\a^{~j}&=\theta_\a^{~j},\Label{t-a}\\
\Theta_u^{~\a}&=\theta_u^{~\a}\Label{t-u},\\
\Delta_u^{~j}&=\delta_u^{~j}\quad\mod~\phi\Label{del}.
%\Label{delta-eq}
\end{align}

\begin{lemma}
Under the assumptions of Theorem~\ref{main}, we have
$$s=r.$$
\end{lemma}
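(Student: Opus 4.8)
The plan is to argue by contradiction, assuming $s<r$, and to exploit the $(1,0)$ forms $\theta_\a^{~j}$, $\theta_u^{~\a}$, $\delta_u^{~j}$ on the source indexed by the ``extra'' directions $\a>s$ that, by \eqref{theta4}, \eqref{theta4'}, all push forward to zero modulo $\phi$. Concretely, fix some $\a_0$ with $s<\a_0\le r$ (this is possible since $q-r\ge1$ and $n=p-r\ge1$ force the relevant forms on the source to be genuinely present). The point is that the vanishing of $\Theta_{\a_0}^{~J}$, $\Theta_U^{~\a_0}$ (mod $\phi$) means that $f$ collapses the corresponding part of the Levi form on $M$, yet by the equivariance identity \eqref{L-eq} the Levi form of $M$ is preserved under $f_*$; this is the tension I want to turn into a contradiction.

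First I would write out the structure equation \eqref{struc-phi} for the diagonal contact form $\phi_{\a_0}^{~\a_0}$ on $M$ and its image. Since $f_*$ respects $\6L_1$, the rank of the Levi form of $M$ restricted to the ``slot'' $T^{1}_{\a_0}$ (the analogue of $T^1$ with index $\a_0$, spanned by the $X_j^{\mu}$ and $Z'^{\mu}_u$ built from $\a_0$) equals $n + (q-r)$, and this must be matched on the target side. But by \eqref{theta4}, \eqref{theta4'} the only surviving components of $\Theta$ with lower index $\a_0$ are among $\Theta_{\a_0}^{~b}$ and $\Theta_V^{~\a_0}$ with $b,V\le s$; combined with \eqref{eq1}, \eqref{eq2} this says $\Theta_{\a_0}^{~J}, \Theta_U^{~\a_0}$ vanish modulo $\phi$ for \emph{all} ranges of $J,U$, hence the pullback of $d\Phi_{\a_0}^{~\a_0}$ (for the corresponding diagonal contact form on $M'$, if nonzero) cannot carry the Levi contribution $\sum_j \theta_{\a_0}^{~j}\wedge\theta_j^{~\a_0} + \sum_u \theta_{\a_0}^{~u}\wedge\theta_u^{~\a_0}$. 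Since these $n+(q-r)$ two-forms are linearly independent modulo $\phi$, this forces $\Phi_{\a_0}^{~\a_0}=0$ mod $\phi$ and in fact all contact forms on $M'$ to annihilate the image of the $\a_0$-slot, i.e.\ $f_*$ kills a nontrivial piece of the Levi form of $M$. This contradicts \eqref{L-eq} together with the nonvanishing \eqref{nonvanish}, because the Levi form of $M'$ is nondegenerate in the relevant directions once a nonzero $\mu$ is chosen.

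The step I expect to be the main obstacle is making the ``rank of the Levi form on the $\a_0$-slot'' argument precise in the presence of the mod-$\phi$ ambiguities: one has to check that after the normalizations already achieved (Lemma~\ref{eds} and \eqref{t-a}, \eqref{t-u}, \eqref{del}) the two-forms $\theta_{\a_0}^{~j}\wedge\theta_j^{~\a_0}$ and $\theta_{\a_0}^{~u}\wedge\theta_u^{~\a_0}$ are genuinely independent modulo the span of $\Theta_b^{~J}\wedge\Theta_J^{~b}$ with $b\le s$ and modulo $\phi$, so that no cancellation from the already-determined block can absorb them. This is exactly the place where the dimension inequalities \eqref{main-ineq}, specifically $q'-r'<p-r$ and $p'-r'<2(p-r)$, are used: they prevent the $s$-block of target forms from being large enough to reproduce the missing Levi terms, just as in the proof of Lemma~\ref{r1}. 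Once that independence is established, the contradiction is immediate and we conclude $s=r$.
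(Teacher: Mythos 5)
There is a genuine gap: your argument never invokes the transversality hypothesis of Theorem~\ref{main}, namely that $df(\xi)\in T'\setminus T'^{c}$ for every $\xi\in T\setminus T^{c}$, and this hypothesis is exactly what makes the lemma true. The Levi-form equivariance \eqref{L-eq} cannot by itself produce the contradiction you want: it is an identity valid for \emph{every} CR map, and it is perfectly consistent with $f_{*}$ annihilating some (or all) of the normal directions --- in that case both sides of \eqref{L-eq} simply vanish on the corresponding slot. Concretely, a CR map sending an open piece of $S_{p,q,r}$ into a single leaf of the Levi-kernel foliation of $S_{p',q',r'}$ (a complex submanifold) satisfies all the structural identities you cite and has $s=0<r$; it is ruled out only by transversality. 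For the same reason your appeal to \eqref{nonvanish} does not close the argument: \eqref{nonvanish} is a condition on a \emph{choice} of $\mu$, and what you would need is that the particular $\mu$ dual to the extra direction $\a_{0}>s$ satisfies it --- but that is precisely the injectivity of the induced map on $T/T^{c}$ in the $\a_{0}$-direction, i.e.\ the very thing under dispute. The remark that ``the Levi form of $M'$ is nondegenerate in the relevant directions'' is also not available: for $r'<q'$ the component $S_{p',q',r'}$ is Levi-degenerate, and in any case \eqref{levi-id} has $f_{*}(\mu\otimes\1\mu)$ on its right-hand side, which may be zero.

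The paper's own proof is a two-line application of the transversality hypothesis: by Lemma~\ref{eds}, every $\Phi_{a}^{~b}$ pulls back to a combination of the $\phi_{\a}^{~\b}$ with $\a,\b\le s$; so if $s<r$ one picks $\xi\in T\setminus T^{c}$ with $\phi_{r}^{~r}(\xi)\ne 0$ and $\phi_{\a}^{~\b}(\xi)=0$ for $(\a,\b)\ne(r,r)$, whence $\Phi_{a}^{~b}(f_{*}\xi)=0$ for all $a,b$, i.e.\ $f_{*}\xi\in T'^{c}$ --- contradicting the hypothesis. Your machinery with the $\Theta$-forms and the Levi form is not needed here; what is needed, and missing, is this one use of transversality.
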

\begin{proof}In Lemma~\ref{eds}, we showed that
$$
\Phi_a^{~b}=0,\quad a>s\quad \text{or}\quad b>s.$$
Suppose that $s<r$.
Choose a tangent vector $\xi$ transversal to $T^c$ such that  $\phi_r^{~r}(\xi)\neq 0$ and $\phi_\a^{~\b}(\xi)=0$ for $(\a,\b)\ne(r,r) $. Then $\Phi_{a}^{~b}(f_*(\xi))=0$ for all $a,b$. Therefore $f_*(\xi)\in {T'}^c$, which is a contradiction with the assumption of the theorem.

\end{proof}

\subsection{Determination of $\Theta_{\a}^{~J},~\Theta_U^{~\a}$ after a change of frame}
Using \eqref{thetaJ2} and \eqref{thetaU2}
and making a change of frame of the target manifold $M'$ given by
\begin{align*}
\widetilde Z'_U &=Z'_U + \eta_U^{~\b}Z_\b,~U>q-r\\
\widetilde X_J &=X_J+C_J^{~\b}Z_\b,~J>n\\
\widetilde Y_\a &=Y_\a+A_\a^{~\b}Z_\b+\sum_{V>q-r}H_\a^{~V}Z'_V+\sum_{J>n}\eta_\a^{~J}X_J,
\end{align*}
where
$$\eta_U^{~\b}+H_U^{~\b}=0, \quad H_U^{~\b}: = -\1{H_{\b}^{~U} },$$
$$(A_\a^{~\b}+\overline{A_\b^{~\a}}) - \sum_{U>q-r}H_\a^{~U}H_U^{~\b}+\sum_{J>n}\eta_\a^{~J}\eta_J^{~\b}=0,$$
$$C_J^{~\a}+\eta_J^{~\a}=0,$$
and fixing the remaining vectors of the frame, we can obtain new $\Theta_\a^{~J},~\Theta_U^{~\a}$ such that
\begin{align}
\Theta_\a^{~J}&=0,\\
\Theta_U^{~\a}&=0.
\end{align}

Summarizing we obtain
\begin{equation}\Label{summarizing}
\Phi_a^{~b}-\phi_a^{~b}=\Theta_a^J-\theta_a^J=\Theta_a^U-\theta_a^U=0
\end{equation}
and hence
% \eqref{eq1}, \eqref{eq2}, \eqref{dtheta1-1}, \eqref{dtheta
\begin{align*}
\Psi_a^{~\b}&=0 \mod ~\phi,~\quad a>r,\\
\Delta_v^{~J}&=0 \mod\{\phi, ~\theta_\alpha^{~+}  \}, \quad J>n,\\
\Omega_k^{~J}&=0 \mod\{\phi, ~\theta_\alpha^{~+}, ~\theta_\a^{~-}\}, \quad J>n, \\
\Delta_U^{~k}&=0 \mod\{\phi,~\1{\theta_{\a}^{~-}} \}, \quad U>q-r,\\
\Omega_U^{~v}&=0 
\mod\{
\phi, 
~\1{\theta_{\alpha}^{~+}}, 
~\1{\theta_\a^{~-}}
\}, \quad U>q-r.
\end{align*}

\subsection{Determination of $\Delta$ and $\Omega$ modulo $\phi$.}
If $r\ge 2$, i.e.\ $\a$ admits at least two values, then as in \cite{KZ}, we conclude that the right-hand sides in the previous equations are in fact independent of $\a$. That is,
\begin{equation}
\Delta_u^{~J}=\Omega_k^{~J}= \Delta_U^{~k}=\Omega_U^{~u}=0\quad\mod\phi.
\end{equation}

Suppose now $r=1$, i.e.\ $\a=1$. We will analyze the Gauss equations 
%obtained by differentiating
%\begin{align}
%\hat\delta_j^{~k}\left(\Psi_\a^{~\a}-\psi_\a^{~\a}\right)-\left(\Omega_j^{~k}-\omega_j^{~k}\right),\\
%\hat\delta_u^{~v}\left(\Psi_\a^{~\a}-\psi_\a^{~\a}\right)-\left(\Omega_u^{~v}-\omega_u^{~v}\right).
%\end{align}
as in \cite{W79}.
From above equations and \eqref{del} we obtain
\begin{align}
\Delta_u^{~k}&=\delta_u^{~k}+\eta_u^{~k}\phi_1^{~1},\Label{Delta-delta}\\
\Delta_u^{~J}&=A_{u~k}^{~J}\theta_1^{~k} \mod\phi, \quad J>n,			\Label{Delta-J}\\
\Omega_k^{~J}&=\2A_{v~k}^{~J}\theta_1^{~v}
	+B_{k~j}^{~J}\theta_1^{~j} 
	\mod\phi,  \quad J>n, 
	\Label{Om}\\
\Delta_U^{~k}&=A_U^{~kv}\theta_v^{~1} \mod\phi, \quad U>q-r,
	\Label{Uk}\\
\Omega_U^{~u}&
	=\2A_U^{~kv}\theta_k^{~1}
	+B_U^{~uv}\theta_v^{~1}
	\mod\phi, \quad U>q-r.
	\Label{Uu}
\end{align}
Substituting into \eqref{nonum1'} and \eqref{nonum2'} (with $\eta_{\a~\b}^{~J}=\eta_U^{~\a\b}=0$)
we obtain
$$
\2A_{v~k}^{~J}=A_{v~k}^{~J}, \quad
\2A_U^{~kv} = A_U^{~kv},\quad
B_{k~l}^{~J}=B_{l~k}^{~J},\quad B_U^{~uv}=B_U^{~vu}.
$$
Consider the structure equations $d\pi =\pi\wedge \pi$
obtained by differentiating the following identities from \eqref{summarizing}: 
$$\Theta_\a^{~j}=\theta_\a^{~j},~\quad \Theta_\a^{~u}=\theta_\a^{~u}.$$
Then we obtain (with $\a=1$):
\begin{align}
\Psi_\a^{~\a}\wedge\theta_\a^{~j}+\theta_\a^{~v}\wedge\Delta_v^{~j}+
	\theta_\a^{~k}\wedge\Omega_k^{~j}+\phi_\a^{~\a}\wedge\Sigma_\a^{~j}
=&\psi_\a^{~\a}\wedge\theta_\a^{~j}+\theta_\a^{~v}\wedge\delta_v^{~j}+\theta_\a^{~k}\wedge\omega_k^{~j}+
\phi_\a^{~\a}\wedge\sigma_\a^{~j},\label{EQ1}\\
\Psi_\a^{~\a}\wedge\theta_\a^{~u}
	+\theta_\a^{~v}\wedge\Omega_v^{~u}+
	\theta_\a^{~k}\wedge\Delta_k^{~u}
	+\phi_\a^{~\a}\wedge\Sigma_\a^{~u}
=&\psi_\a^{~\a}\wedge\theta_\a^{~u}+\theta_\a^{~v}\wedge\omega_v^{~u}+\theta_\a^{~k}\wedge\delta_k^{~u}+
\phi_\a^{~\a}\wedge\sigma_\a^{~u},\label{EQ2}
\end{align}
which yield using \eqref{Delta-delta}:
\begin{align}
\left[\hat\delta_k^{~j}\left(\Psi_\a^{~\a}-\psi_\a^{~\a}\right)-\left(\Omega_k^{~j}-\omega_k^{~j}\right)\right]\wedge\theta_\a^{~k}
+\phi_\a^{~\a}\wedge\left(\Sigma_\a^{~j}-\sigma_\a^{~j}-\eta_v^{~j}\theta_\a^{~v}\right)&=0,\Label{dtheta1}\\
\left[\hat\delta_v^{~u}\left(\Psi_\a^{~\a}-\psi_\a^{~\a}\right)-\left(\Omega_v^{~u}-\omega_v^{~u}\right)\right]\wedge\theta_\a^{~v}
+\phi_\a^{~\a}\wedge\left(\Sigma_\a^{~u}-\sigma_\a^{~u}-\eta_k^{~u}\theta_\a^{~k}\right)
&=0,\Label{dtheta2}
\end{align}
where
$$\eta_k^{~u}:=-\overline{\eta_u^{~k}}$$
and $\hat\delta$ denotes the Kronecker delta.
Then
by Cartan's lemma applied to \eqref{dtheta1}, we obtain
$$
\hat\delta_k^{~j}
	\left(\Psi_\a^{~\a}-\psi_\a^{~\a}\right)
	-\left(\Omega_k^{~j}-\omega_k^{~j}\right)
	=0\quad
	\mod\theta_\a^{~+},\phi,
$$
and by \eqref{dtheta2}, we obtain
$$\hat\delta_v^{~u}
	\left(\Psi_\a^{~\a}-\psi_\a^{~\a}\right)
	-\left(\Omega_v^{~u}-\omega_v^{~u}\right)
	=0\quad
	\mod\theta_\a^{~-},\phi,$$
which imply
\begin{equation}
\Psi_\a^{~\a}-\psi_\a^{~\a}
	=\Omega_j^{~j}-\omega_j^{~j}
	=\Omega_u^{~u}-\omega_u^{~u}\quad
	\mod\theta_\a^{~+},\theta_\a^{~-},\phi.
\label{psi-omega}
\end{equation}
Using symmetry relation for $\Omega$ and the fact that $\theta_\a^{~+}$, $\theta_\a^{~-}$ are $(1,0)$, $(0,1)$ respectively , we obtain
\begin{align*}
\Omega_k^{~j}&=\omega_k^{~j}\quad\mod\phi,\quad j\neq k,\\
\Omega_v^{~u}&=\omega_v^{~u}\quad\mod\phi,\quad u\neq v,
\end{align*}
and
\begin{equation}\Label{p-o}
\Psi_\a^{~\a}-\psi_\a^{~\a}=\Omega_j^{~j}-\omega_j^{~j}=\Omega_u^{~u}-\omega_u^{~u}\quad\mod\phi.
\end{equation}

Now consider the structure equation obtained by differentiating
the following identity from \eqref{summarizing}:
$$\Phi_\a^{~\a}-\phi_\a^{~\a}=0,$$
which yields
\begin{equation}\label{d-phi}
\left(\Psi_\a^{~\a}-\psi_\a^{~\a}-\hat\Psi_\a^{~\a}+\hat\psi_\a^{~\a}\right)\wedge\phi_\a^{~\a}=0,
\end{equation}
or equivalently
$$\left(\Psi_\a^{~\a}-\psi_\a^{~\a}+\Psi_{\bar\a}^{~\bar\a}-\psi_{\bar\a}^{~\bar\a}\right)\wedge\phi_\a^{~\a}=0.$$
Let
$$\Psi_\a^{~\a}-\psi_\a^{~\a}=\hat\Psi_{\a}^{~\a}-\hat\psi_{\a}^{~\a}+g\phi_\a^{~\a}$$
for some pure imaginary function $g$.
Applying a real vector change (see Defintion~\ref{changes}) of the source manifold defined by
$$\widetilde Y_\a=Y_\a+\frac g2  Z_\a,$$
we may assume that
\begin{equation}\Label{this}
\Psi_\a^{~\a}-\psi_\a^{~\a}=\hat\Psi_{\a}^{~\a}-\hat\psi_{\a}^{~\a}.
\end{equation}

By \eqref{dtheta1}, \eqref{dtheta2}, we obtain
\begin{align}
\Sigma_\a^{~j}-\sigma_\a^{~j}=g_k^{~j}\theta_\a^{~k}+\eta_v^{~j}\theta_\a^{~v}\quad\mod\phi,\Label{sigma1}\\
\Sigma_\a^{~u}-\sigma_\a^{~u}=g_v^{~u}\theta_\a^{~v}
	+\eta_k^{~u}\theta_\a^{~k}\quad\mod\phi,\Label{sigma2}
\end{align}
for suitable functions $g_k^{~j}$, $g_v^{~u}$.
Then, using
the structure identities $d\pi=\pi\wedge\pi$ obtained from differentiating \eqref{this}, we obtain
$$\theta_\a^{~v}\wedge(\Sigma_u^{~\a}-\sigma_u^{~\a})+\theta_\a^{~k}\wedge(\Sigma_k^{~\a}-\sigma_k^{~\a})=
(\Sigma_\a^{~v}-\sigma_\a^{~v})\wedge\theta_v^{~\a}+
(\Sigma_\a^{~k}-\sigma_\a^{~k})\wedge\theta_k^{~\a}\quad\mod\phi,$$
which implies
$$g_k^{~j}=\overline{g_j^{~k}}\quad g_u^{~v}=\overline{g_v^{~u}}.$$

Then substituting \eqref{sigma1}, \eqref{sigma2} into  \eqref{dtheta1} and \eqref{dtheta2} imply
\begin{align}
\hat\delta_k^{~j}\left(\Psi_\a^{~\a}-\psi_\a^{~\a}\right)-\left(\Omega_k^{~j}-\omega_k^{~j}\right)+g_j^{~k}\phi_\a^{~\a}&
=0,\label{Omega-omega1}\\
\hat\delta_v^{~u}\left(\Psi_\a^{~\a}-\psi_\a^{~\a}\right)-\left(\Omega_v^{~u}-\omega_v^{~u}\right)+g_v^{~u}\phi_\a^{~\a}&
=0.\label{Omega-omega2}
\end{align}

Now differentiate \eqref{Omega-omega1} and use the structure equations $d\pi=\pi\wedge\pi$ together with \eqref{Delta-delta}, \eqref{p-o} to obtain
\begin{align*}
&\sum_{V>q-r}\Delta_k^{~V}\wedge\Delta_V^{~j}+\sum_{K>n}\Omega_k^{~K}\wedge\Omega_K^{~j}\\
=~&\hat\delta_k^{~j}\left(\theta_\a^{~v}\wedge\left(\Sigma_v^{~\a}-\sigma_v^{~\a}\right)
+\theta_\a^{~l}\wedge\left(\Sigma_l^{~\a}-\sigma_l^{~\a}\right)\right)
+\hat\delta_l^{~j}\left(\theta_\a^{~l}\wedge\left(\Sigma_k^{~\a}-\sigma_k^{~\a}\right)
-\theta_k^{~\a}\wedge(\Sigma_\a^{~l}-\sigma_\a^{~l})\right)\\
	+~&g_k^{~j}\left(\theta_\a^{~v}\wedge\theta_v^{~\a}
	+\theta_\a^{~l}\wedge\theta_l^{~\a}\right) \mod \phi.
\end{align*}
Substituting the identities following \eqref{Delta-delta}
as well as \eqref{sigma1}, \eqref{sigma2},
we now obtain
\begin{align}
B_{k~l}^{~K} B_{~K}^{j~m}&=g_l^{~m}\hat\delta_k^{~j}+g_l^{~j}\hat\delta_k^{~m}+g_k^{~m}\hat\delta_l^{~j}+g_k^{~j}\hat\delta_l^{~m},
\Label{2nd-j1}\\
A_{u~k}^{~K}B_{~K}^{j~m}&=\eta_u^{~m}\hat\delta_k^{~j}+\eta_u^{~j}\hat\delta_k^{~m},\Label{2nd-j2}\\
A_{ku}^{~V}A_{V}^{~jv}
+A_{u~k}^{~K}A_{~K}^{v~j}&=g_u^{~v}\hat\delta_k^{~j}+g_k^{~j}\hat\delta_u^{~v}.\Label{2nd-u}
\end{align}

Now Lemma~5.3 from \cite{EHZ05} implies that $B_{k~l}^{~K}=0$
provided $n'<2n$, which is part of our assumptions (recall that $n=p-r$).
Therefore putting all indices to be $j$ in the right-hand side of \eqref{2nd-j1} we obtain $g_j^{~j}=0$ and hence $g_j^{~k}=0$ since it is hermitian. Then \eqref{2nd-j2} for $m=k=j$ yields 
$$\eta_u^{~j}=0.$$
Then \eqref{Om} reads
$$\Omega_k^{~J}=A_{u~k}^{~J}\theta_\a^{~u}\mod\phi, 
	\quad J>n.$$
Then by differentiation and the structure identities $d\pi=\pi\wedge\pi$ we obtain
$$\Delta_k^{~V}\wedge\Delta_V^{~J}+\Omega_k^{~L}\wedge\Omega_L^{~J}+\theta_k^{~\a}\wedge\Sigma_\a^{~J}
	=A_{u~k}^{~J}\theta_\a^{~l}\wedge
	\delta_l^{~u}\quad
	\mod \theta^-,\phi.$$
By substituting \eqref{Delta-delta} and \eqref{Delta-J},
we obtain
$$A_{u~l}^{~J}\delta_k^{~u}\wedge\theta_1^{~l}
	=A_{u~k}^{~J}\theta_1^{~l}\wedge\delta_l^{~u}.$$
Hence
$$A_{u~k}^{~J}=0$$
and \eqref{2nd-u} becomes
$$\sum_{V>q-r} A_{l u}^{~V} A_{V}^{~j w}=g_u^{~w}\hat\delta_l^{~j}.$$
Therefore for each fixed $u$, the vectors $A^{ju}:=(A_{V}^{~ju}) \in \C^{q'-r'-(q-r)}$, $j=1,\ldots,n,$ are orthogonal to each other.
If
$$g_u^{~u}\neq 0,$$
then
$$A_{~j u}^{V} A_{V}^{~j u}=g_u^{~u}\neq 0,\quad j=1,\ldots,n,$$
which leads to a contradiction, since we assumed $q'-r'<n$.
Consequently  $A_{~j u}^{V}=0$,  $g_u^{~u}=0$ and hence $g_u^{~v}=0$, and \eqref{Uk}, \eqref{Uu} yield
$$\Delta_U^{~j}=0\quad\mod\phi, \quad U>q-r$$
and
$$\Omega_U^{~v}=B_U^{~vw}\theta_w^{~\a}\quad\mod\phi,  \quad U>q-r.$$
By differentiating the last equation and following the same argument as before for $\Omega_k^{~J}$, we obtain
$$B_U^{~vw}=0.$$

%Suppose $n=2$, then by assumption, we have $q'-r'=1<2=n$. Hence $q'-r'=q-r=1$ and \eqref{2nd-u} becomes
%$$A_{u~k}^{~K}A_{~K}^{v~j}=g_u^{~v}\hat\delta_k^{~j}+g_k^{~j}\hat\delta_u^{~v},$$
%which together with \eqref{2nd-j1} and \eqref{2nd-j2} implies
%$$B=A=0.$$
%

Summing up, we obtain
\begin{lemma}\Label{summ}
\begin{align*}
\Delta_u^{~j}&=\delta_u^{~j},\\
\Psi_a^{~\b}&=0 \mod ~\phi,~\quad a>r,\\
\Delta_v^{~J}&=\Omega_k^{~J}=0 \mod\phi\, \quad J>n,\\
\Delta_U^{~k}&=\Omega_U^{~v}=0  \mod\phi, \quad U>q-r.
\end{align*}
\end{lemma}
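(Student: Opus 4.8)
The plan is to prove the four groups of identities by splitting into the cases $r\ge 2$ and $r=1$: the first follows from a soft ``independence of the index $\a$'' argument exactly as in \cite{KZ}, while the second forces us into a Gauss-equation analysis in the spirit of \cite{W79}. In both cases the inputs are the normalization \eqref{summarizing}, the reduced expressions \eqref{deltaJ'}--\eqref{omegaU'} for $\Delta,\Omega$ modulo $\{\phi,\theta_\a^+,\theta_\a^-\}$, the vanishing $\eta_\b^{~J}=\eta_U^{~\g}=0$ secured by the change of frame of $M'$, and the identities \eqref{del} and \eqref{Delta-delta}.

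For $r\ge 2$ I would use that each of $\Delta_v^{~J}$, $\Omega_k^{~J}$, $\Delta_U^{~k}$, $\Omega_U^{~v}$ (with $J>n$, $U>q-r$) is independent of $\a$, while \eqref{deltaJ'}--\eqref{omegaU'} hold modulo $\{\phi,\theta_\a^+,\theta_\a^-\}$ for each individual $\a$. Since $\a$ takes at least two values and the spans $\theta_\a^{\pm}$ for distinct $\a$ are linearly independent, intersecting the congruences over $\a$ forces these forms to vanish modulo $\phi$ alone; the same averaging upgrades \eqref{del} to $\Delta_u^{~j}=\delta_u^{~j}$, and $\Psi_a^{~\b}\equiv0\mod\phi$ for $a>r$ is already on the list following \eqref{summarizing}.

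For $r=1$ (so $\a=1$) I would run the second-fundamental-form argument. First expand $\Delta,\Omega$ as in \eqref{Delta-J}--\eqref{Uu} with coefficient tensors $A_{u~k}^{~J},\2A_{v~k}^{~J},B_{k~j}^{~J},A_U^{~kv},\2A_U^{~kv},B_U^{~uv}$; substituting into \eqref{nonum1'} and \eqref{nonum2'} yields $\2A=A$ and the symmetries $B_{k~l}^{~J}=B_{l~k}^{~J}$, $B_U^{~uv}=B_U^{~vu}$. Then, after the real-vector change \eqref{this} normalizing $\Psi_\a^{~\a}-\psi_\a^{~\a}=\hat\Psi_\a^{~\a}-\hat\psi_\a^{~\a}$, differentiate \eqref{Omega-omega1} and \eqref{Omega-omega2}, use $d\pi=\pi\wedge\pi$ together with \eqref{Delta-delta} and \eqref{p-o}, and read off the quadratic relations \eqref{2nd-j1}--\eqref{2nd-u} among $A$, $B$ and the hermitian tensors $g_k^{~j}$, $g_v^{~u}$. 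The crucial input is Lemma~5.3 of \cite{EHZ05}, applied to the symmetric tensor $B_{k~l}^{~K}$ via \eqref{2nd-j1}: since $n'<2n$ it gives $B_{k~l}^{~K}=0$. Cascading: \eqref{2nd-j1} with all indices equal gives $g_j^{~j}=0$, hence $g_j^{~k}=0$ by hermitian symmetry; \eqref{2nd-j2} then gives $\eta_u^{~j}=0$, collapsing \eqref{Om} to $\Omega_k^{~J}=A_{u~k}^{~J}\theta_\a^{~u}\mod\phi$; a further differentiation plus the structure equations forces $A_{u~k}^{~J}=0$; \eqref{2nd-u} reduces to $\sum_{V>q-r}A_{lu}^{~V}A_V^{~jw}=g_u^{~w}\hat\delta_l^{~j}$, and the resulting mutual orthogonality of the vectors $A^{ju}\in\C^{q'-r'-(q-r)}$ together with $q'-r'<n$ forces $A^{ju}=0$ and $g_u^{~v}=0$; finally the same differentiation trick kills $B_U^{~vw}$, whence \eqref{Uk}, \eqref{Uu} give $\Delta_U^{~j}=\Omega_U^{~v}=0\mod\phi$, while \eqref{Delta-delta} with $\eta_u^{~j}=0$ upgrades \eqref{del} to $\Delta_u^{~j}=\delta_u^{~j}$.

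The main obstacle is precisely the $r=1$ case: there is no spare index to average over, so the full Gauss-equation machinery is unavoidable, and the delicate point is organizing the quadratic identities \eqref{2nd-j1}--\eqref{2nd-u} so that the two distinct dimension hypotheses — $n'<2n$, exploited through the rank lemma of \cite{EHZ05} applied to the $B$-block, and $q'-r'<n$, exploited through the orthogonality of the $A$-block — act on the correct pieces, all while tracking the mixed $\theta^+$/$\theta^-$ contributions that are absent in the Shilov-boundary setting of \cite{KZ} and that reflect the $2$-nondegeneracy of the lower-rank boundary components.
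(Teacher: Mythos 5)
Your proposal follows essentially the same route as the paper: the identical case split between $r\ge 2$ (where independence of the index $\a$ as in \cite{KZ} gives the vanishing modulo $\phi$) and $r=1$ (where the Gauss-equation analysis in the spirit of \cite{W79} is carried out), with the same sequence of normalizations \eqref{this}, the same quadratic identities \eqref{2nd-j1}--\eqref{2nd-u}, the same appeal to Lemma~5.3 of \cite{EHZ05} under $n'<2n$ to kill the $B$-block, and the same orthogonality argument under $q'-r'<n$ to kill the $A$-block. I see no gap; this matches the paper's own proof.
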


%%%%%%%%%%%%%%%%%%%%%%%%%%%%%%%%%%%%%%%%%%%%%%%%%%%%%%%%%%%%%%%%%%%%%%%%%%%%%%%%%%%%%%%%%%%%%%%%
\section{Determination of the second fundamental forms}
Next, we shall determine all second fundamental forms
$$\Psi_a^{~\b},~\quad a>r,$$
$$\Delta_v^{~J},~\Omega_k^{~J},~ \Sigma_\b^{~J}\quad J>n,$$
$$\Delta_U^{~k},~\Omega_U^{~v} ,~\Sigma_U^{~\b}\quad U>q-r.$$

\subsection{Determination of $\Psi_{a}^{~\b}$ for $a>r$}
In view of Lemma~\ref{summ} we can write
\begin{equation}\label{psi-a}
\Psi_a^{~\beta}=h_{a~\delta}^{~\b~\g}\phi_\g^{~\delta}, \quad a>r.
\end{equation}
Since
$$\Phi_a^{~b}=\Theta_a^{~J}=\Theta_a^{~U}=0,\quad a>r,$$
differentiation of \eqref{psi-a} and using the structure equations $d\pi=\pi\wedge\pi$ yields
\begin{equation*}
h_{a~\delta}^{~\b~\g}\left(\theta_\g^{~u}\wedge\theta_u^{~\delta}+\theta_\g^{~k}\wedge\theta_k^{~\delta}\right)=0\quad\mod\phi,
\end{equation*}
which implies
$$h_{a~\delta}^{~\b~\g}=0,$$
and hence
\begin{equation}
\Psi_a^{~\b}=0, \quad a>r.
\end{equation}

\subsection{Determination of $\Delta_u^{~J},~\Omega_{k}^{~J}$ for $J>n$}
In view of Lemma~\ref{summ} we write
$$\Delta_u^{~J}=h_{u~\b}^{~J~\a}\phi_\a^{~\b}, \quad J>n.$$
By differentiation and using structure identities as before, we obtain
$$\theta_u^{~\b}\wedge\Sigma_\b^{~J}=h_{u~\b}^{~J~\a}\left(\theta_\a^{~v}\wedge\theta_v^{~\b}+\theta_\a^{~k}\wedge\theta_k^{~\b}\right)
\quad\mod\phi.$$
Since the left hand side contains no $\theta_\a^{~k}\wedge\theta_k^{~\b}$ terms, we obtain
$$h_{u~\b}^{~J~\a}=0,$$
i.e.,
$$\Delta_u^{~J}=0$$
and
$$\Sigma_\b^{~J}=0\quad\mod\theta_u^{~\b},\phi.$$
Similar argument implies
$$\Omega_k^{~J}=0$$
and
$$\Sigma_\b^{~J}=0\quad\mod\theta_k^{~\b},\phi.$$
Consequently, we obtain
$$\Sigma_\b^{~J}=0\quad\mod\phi.$$

\subsection{Determination of $\Delta_U^{~j},~\Omega_{U}^{~v}$ for $U>q-r$}
Let
$$\Delta_U^{~j}=h_{U~\b}^{~j~\a}\phi_\a^{~\b}.$$
By differentiation and structure identities as before, we obtain
$$\Sigma_U^{~\b}\wedge\theta_\b^{~j}=h_{U~\b}^{~j~\a}\left(\theta_\a^{~v}\wedge\theta_v^{~\b}+\theta_\a^{~k}\wedge\theta_k^{~\b}\right)
\quad\mod\phi.$$
Since the left hand side contains no $\theta_\a^{~v}\wedge\theta_v^{~\b}$ terms, we obtain
$$h_{U~\b}^{~j~\a}=0,$$
i.e.,
$$\Delta_U^{~j}=0$$
and
$$\Sigma_U^{~\b}=0\quad\mod\theta_\b^{~j},\phi.$$
Similar argument implies
$$\Omega_U^{~v}=0$$
and
$$\Sigma_U^{~\b}=0\quad\mod\theta_\b^{~v},\phi.$$
Consequently, we obtain
$$\Sigma_U^{~\b}=0\quad\mod\phi.$$

\subsection{Determination of $\Sigma_{\b}^{~J}$ and $\Sigma_{U}^{~\b}$ for $U>q-r$ and $J>n$}
Now let
$$\Sigma_\g^{~J}=h_{\g~\b}^{~J~\a}\phi_\a^{~\b}.$$
By differentiation, we obtain
$$0=
h_{\g~\b}^{~J~\a}\left(\theta_\a^{~v}\wedge\theta_v^{~\b}+\theta_\a^{~k}\wedge\theta_k^{~\b}\right)
\quad\mod\phi,$$
which implies
$$h_{\g~\b}^{~J~\a}=0,$$
i.e.,
$$\Sigma_\g^{~J}=0.$$
Similar argument implies
$$\Sigma_U^{~\b}=0.$$

\medskip
We summarize the obtained alignment of the connection forms:

\begin{proposition}\Label{eds-2}
For any local CR embedding $f$ from $S_{p,q,r}$ into $S_{p',q',r'}$
satisfying the assumptions of either Theorem~\ref{main},
there is a choice of sections of the frame bundles
$\6B_{p,q}\to S_{p,q}$ and $\6B_{p',q'}\to S_{p',q'}$
such that
\begin{align}
\Phi_a^{~b}&=\phi_a^{~b}, \quad \Theta_U^{~a}=\theta_U^{~a},\quad
\Theta_a^{~J}=\theta_a^{~J}, \quad \Delta_u^{~j}=\delta_u^{~j},\\
%\Psi_\a^{~\b}&=\psi_{\a}^{~\b} \mod \phi,~\quad \a\ne\b,\Label{psi-eqn2}\\
\Psi_a^{~\beta}&=\Delta_u^{~K}=\Delta_U^{~j}=
%\Psi_\alpha^{~\alpha}-\psi_\alpha^{~\alpha}&=\Omega_j^{~j}-\omega_j^{~j} \mod \phi,\Label{psi-eqn2}\\
%\Omega_j^{~k}&=\omega_j^{~k} \mod\phi,\quad j\neq k,\Label{omega-eq1}\\
\Omega_j^{~K}=\Omega_U^{~v}=
\Sigma_{\a}^{~K}=\Sigma_U^{~\b}=
0, \quad a>r, ~U>q-r,~ K>n.
\end{align}
\end{proposition}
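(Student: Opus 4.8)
The plan is to glue together the block-by-block normalizations of the Maurer--Cartan form $\pi$ carried out in the three preceding sections, passing the output of each stage as input to the next. I would start from Lemma~\ref{eds}: after a suitable choice of sections of $\6B_{p,q,r}$ and $\6B_{p',q',r'}$ the diagonal blocks are aligned modulo $\phi$, that is $\Phi_a^{~b}=\tilde\phi_a^{~b}$ and $\Theta_a^{~J}-\tilde\theta_a^{~J}\equiv\Theta_U^{~a}-\tilde\theta_U^{~a}\equiv 0$ modulo $\phi$, for some integer $s\le\min(r,r')$ governing the size of the already-aligned block; the engine here is repeated use of \eqref{struc-phi}, \eqref{struc-theta1}, \eqref{struc-theta2}, Cartan's Lemma, and both dimension bounds of \eqref{main-ineq} --- $q'-r'<p-r$ forces the relevant $\eta$-vectors to vanish, while $p'-r'<2(p-r)$, i.e.\ $n'<2n$, forces the $h$-vectors into a standard block --- exactly in the pattern of \cite{KZ}. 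Next I would upgrade $s$ to $r$, the one place the nondegeneracy hypothesis $df(\xi)\in T'\setminus{T'}^{c}$ for $\xi\in T\setminus T^{c}$ is used: if $s<r$, a transversal $\xi$ with only $\phi_r^{~r}(\xi)\ne 0$ would satisfy $\Phi_a^{~b}(f_*\xi)=0$ for all $a,b$, forcing $f_*\xi\in{T'}^{c}$ and contradicting $2$-nondegeneracy. With $s=r$, a composite position/real-vector change of frame on the source normalizes $\Theta_\a^{~j}=\theta_\a^{~j}$, $\Theta_u^{~\a}=\theta_u^{~\a}$, $\Delta_u^{~j}\equiv\delta_u^{~j}$ modulo $\phi$, and a subsequent change of frame on the target kills the leftover $\phi$-terms, yielding \eqref{summarizing}.

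The delicate stage is determining $\Delta$ and $\Omega$ modulo $\phi$. For $r\ge 2$ the index $\a$ ranges over at least two values, so $\a$-independence of $\Delta_v^{~J}$, $\Omega_k^{~J}$, $\Delta_U^{~k}$, $\Omega_U^{~v}$ forces all of them to vanish modulo $\phi$ at once. For $r=1$ there is no such redundancy, and I would carry out a Gauss-equation analysis in the style of Webster \cite{W79}: differentiate the normalized identities $\Theta_1^{~j}=\theta_1^{~j}$, $\Theta_1^{~u}=\theta_1^{~u}$ and $\Phi_1^{~1}=\phi_1^{~1}$, apply Cartan's Lemma to extract the symmetries of the second-fundamental-form coefficients $B_{k~l}^{~J}$, $A_{u~k}^{~J}$, $A_{ku}^{~V}$, invoke Lemma~5.3 of \cite{EHZ05} --- available because $n'<2n$ --- to obtain $B_{k~l}^{~J}=0$ (whence $g_j^{~k}=0$ and $\eta_u^{~j}=0$), and finally use $q'-r'<n$ in an orthogonality count to kill the $A$-coefficients. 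This produces Lemma~\ref{summ}: $\Delta_u^{~j}=\delta_u^{~j}$, $\Psi_a^{~\b}\equiv 0$ ($a>r$), $\Delta_v^{~J}\equiv\Omega_k^{~J}\equiv 0$ ($J>n$), $\Delta_U^{~k}\equiv\Omega_U^{~v}\equiv 0$ ($U>q-r$), all modulo $\phi$.

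It remains to remove the residual $\phi$-ambiguity from the second fundamental forms, which I would do by a uniform bootstrap. Treat in turn $\Psi_a^{~\b}$ ($a>r$), then $\Delta_u^{~J},\Omega_k^{~J}$ ($J>n$), then $\Delta_U^{~j},\Omega_U^{~v}$ ($U>q-r$), and finally $\Sigma_\a^{~J},\Sigma_U^{~\b}$: write the form as $(\text{coefficient})\cdot\phi_\a^{~\b}$, differentiate, and apply $d\pi=\pi\wedge\pi$; since the products $\theta_\g^{~u}\wedge\theta_u^{~\delta}$ and $\theta_\g^{~k}\wedge\theta_k^{~\delta}$ are linearly independent (being of mixed type) and do not appear on the other side of the resulting identity, the coefficients must vanish, which also forces the $\Sigma$'s to vanish modulo $\phi$ and then outright. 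Collecting \eqref{summarizing}, Lemma~\ref{summ}, and these last computations gives precisely the displayed identities. I expect the $r=1$ case of the middle stage to be the main obstacle: it is exactly where the first-order Levi form $\6L_1$ is insufficient, where the second-order tensor $\6L_2$ and the Gauss equations genuinely come into play, and where \emph{both} inequalities in \eqref{main-ineq} are needed --- one through \cite{EHZ05}, the other through the orthogonality count.
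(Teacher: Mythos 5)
Your proposal is correct and follows essentially the same route as the paper: Lemma~\ref{eds} plus the transversality argument giving $s=r$, the frame changes yielding \eqref{summarizing}, the $r\ge2$ versus $r=1$ dichotomy with the Webster-style Gauss-equation analysis (Lemma~5.3 of \cite{EHZ05} and the bound $q'-r'<n$) giving Lemma~\ref{summ}, and the final differentiation bootstrap that removes the residual $\phi$-terms. The only slight imprecision is labelling the $s=r$ contradiction as one with ``$2$-nondegeneracy''; it is a contradiction with the transversality hypothesis $df(\xi)\in T'\setminus{T'}^{c}$ of Theorem~\ref{main}, as you in fact state correctly just before.
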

\medskip

%\begin{remark}
%Similarly to Remark~\ref{back1},
%we can restrict to changing only the section of the second bundle
%$\6B_{p',q'}\to S_{p',q'}$.
%\end{remark}

\section{Splitting of the image with suitable dimensions}
We shall write $Gr(V,s)$ for the Grassmanian of all $s$-dimensional subspaces of $V$.
\begin{proposition}\label{flatness}
Under the assumptions of Theorem~\ref{main}, there exist
vector subspaces $V_0,V_1,V_2\subset \C^{p'+q'}$
of dimension 
$$\dim V_0 = p+q, \quad \dim V_1 = r'-r, \quad \dim V_2 = n'+q'-r' - (n+q-r)$$
 that form a direct sum, such that the basic form $\langle\cdot,\cdot\rangle$ is null when restricted to $V_{1}$, nondegenerate of signature $(p,q)$ when restricted to $V_{0}$, and nondegenerate of signature 
 $(n'-n,q'-r'-q+r)$ when restricted to $V_{3}$,
and such that whenever $x\in S_{p,q,r}$ and $f(x)$ is defined, we have
\begin{equation}\Label{6.1}
f(x) = W_0\oplus V_1\oplus W_2 \in Gr(V_0,q) \oplus V_1 \oplus  Gr(V_2, (q'-r')-(q-r)),
\end{equation}
such that the basic form restricted to $W_0$ has rank $r$.
\end{proposition}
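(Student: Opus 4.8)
\emph{Plan of proof.} The strategy is to \emph{integrate} the normal form of the connection forms supplied by Proposition~\ref{eds-2}. The plan is to fix the two frame sections provided by that proposition and to regard the pulled-back target adapted frame
\[
Z_1,\dots,Z_{r'},\ Z'_1,\dots,Z'_{q'-r'},\ X_1,\dots,X_{n'},\ Y_1,\dots,Y_{r'}
\]
as $\C^{p'+q'}$-valued functions on the (connected) domain of $f$, whose differentials are governed by the $M'$-analogue of \eqref{pi}. Throughout I would use the symmetry relation \eqref{sym-rel} to promote each vanishing in Proposition~\ref{eds-2} to the vanishing of the transposed component as well. Recall that at each point the $q'$-plane $f(x)$ equals $\sp\{Z_a,Z'_U\}$, with $\langle\cdot,\cdot\rangle$-kernel $\sp\{Z_a\}$.

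\emph{Step 1: the subspace $V_1$.} For $a>r$ the components $\Phi_a^{~b}$, $\Theta_a^{~J}$, $\Theta_a^{~V}$ all vanish, as does $\Psi_a^{~b}$ for $b\le r$; hence $dZ_a\in\sp\{Z_b:b>r\}$, so $V_1:=\sp\{Z_b:b>r\}$ is a fixed subspace of dimension $r'-r$. It is totally isotropic, because the $Z_b$ lie in the kernel of the basic form on the target $q'$-planes. Since $\langle Z_a,Z_b\rangle=\langle Z'_U,Z_b\rangle=0$ identically, every $f(x)$ satisfies $V_1\subset f(x)\subset V_1^{\perp}$.

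\emph{Step 2: two constant ``thickened'' subspaces (the main point).} The naive candidates --- the spans of the small-index frame vectors --- are \emph{not} $d$-invariant, since $dZ_a$ for $a\le r$ may leak into $V_1$ via the uncontrolled components $\Psi_a^{~b}$ with $b>r$. The remedy is to enlarge them by $V_1$: put
\[
\widetilde V_0:=\sp\{Z_a\ (1\le a\le r'),\ Z'_U\ (U\le q-r),\ X_J\ (J\le n),\ Y_a\ (a\le r)\},
\]
\[
\widetilde V_2:=\sp\{Z_a\ (r<a\le r'),\ Z'_U\ (q-r<U\le q'-r'),\ X_J\ (n<J\le n')\}.
\]
I would differentiate each listed generator and check, from Proposition~\ref{eds-2} and \eqref{sym-rel}, that in each case the potentially offending terms either land in $\sp\{Z_b:1\le b\le r'\}\subset\widetilde V_0$ (respectively in $V_1\subset\widetilde V_2$), or in the span of generators already present, or else their coefficient is forced to be zero --- e.g.\ $\Theta_a^{~J}=0$ for $J>n$, $\Delta_u^{~K}=0$ for $K>n$, $\Omega_U^{~v}=0$ for $U>q-r$, $\Theta_U^{~a}=0$ for $U>q-r$, $\Sigma_U^{~\beta}=0$ for $U>q-r$, $\Sigma_\a^{~K}=0$ for $K>n$, together with their transposes under \eqref{sym-rel}. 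Since the differentials of the generators are thus $\C$-linear combinations of the generators themselves, $\widetilde V_0$ and $\widetilde V_2$ are fixed subspaces (by connectedness), of dimensions $(p+q)+(r'-r)$ and $(r'-r)+(n'-n)+(q'-r')-(q-r)$ respectively; one also has $\widetilde V_0\cap\widetilde V_2=V_1$, and since both are contained in $V_1^{\perp}$ a dimension count gives $\widetilde V_0+\widetilde V_2=V_1^{\perp}$. I expect this component bookkeeping to be the main obstacle; everything after it is linear algebra.

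\emph{Step 3: complements and the splitting of $f(x)$.} As $\langle Z_a,w\rangle=0$ for $a>r$ and $w\in\widetilde V_0$, the isotropic $V_1$ lies in the radical of $\langle\cdot,\cdot\rangle|_{\widetilde V_0}$; reading off the basic form on the generators shows that $\widetilde V_0/V_1$ carries a nondegenerate form of signature $(p,q)$ and $\widetilde V_2/V_1$ one of signature $(n'-n,\,q'-r'-q+r)$, so $V_1$ is exactly that radical. Now choose \emph{any} fixed complements $V_0$ of $V_1$ in $\widetilde V_0$ and $V_2$ of $V_1$ in $\widetilde V_2$; then $V_0\oplus V_1\oplus V_2=V_1^{\perp}$, with the dimensions and signatures asserted in the proposition. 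Finally write $f(x)=A(x)\oplus V_1\oplus B(x)$ with $A(x):=\sp\{Z_a(x)\ (a\le r),\,Z'_U(x)\ (U\le q-r)\}\subset\widetilde V_0$ and $B(x):=\sp\{Z'_U(x)\ (U>q-r)\}\subset\widetilde V_2$, and let $W_0(x)\subset V_0$, $W_2(x)\subset V_2$ be the images of $A(x),B(x)$ under the projections along $V_1$. These projections are injective (the frame vectors are independent of $V_1$), and they are isometries onto $A(x),B(x)$ because the correction terms lie in $V_1$, which is isotropic and orthogonal to $\widetilde V_0$ and $\widetilde V_2$; moreover $W_0(x)+V_1=A(x)+V_1$ and $W_2(x)+V_1=B(x)+V_1$. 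A dimension count then yields $f(x)=W_0(x)\oplus V_1\oplus W_2(x)$ with $W_0(x)\subset V_0$, $W_2(x)\subset V_2$, and $\langle\cdot,\cdot\rangle|_{W_0(x)}\cong\langle\cdot,\cdot\rangle|_{A(x)}$, which has the rank asserted in the statement.
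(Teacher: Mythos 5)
Your argument is correct and is essentially the paper's own proof in different packaging: where the paper expresses the moving adapted frame in terms of a constant reference frame via a matrix $A$ satisfying $dA=\Pi A$ with $A=I$ at a reference point and invokes uniqueness of solutions of the resulting complete linear system to kill the large-index coefficient blocks, you reach the same conclusion by checking that the spans $\widetilde V_0$, $\widetilde V_2$ are $d$-invariant (using exactly the same vanishings from Proposition~\ref{eds-2} and the symmetry relation) and hence constant. The concluding linear algebra also matches: the paper's subtraction of the $\sum_{b>r}\lambda^{~b}Z_b$ components is precisely your projection along $V_1$ onto a complement, with the paper simply fixing the particular complements $V_0=\sp\{Z_\a,Z'_u,X_k,Y_\b\}$ and $V_2=\sp\{Z'_U,X_J\}$ that you leave arbitrary.
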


\begin{proof}
Denote by $M\subset S_{p,q,r}$ the open subset where $f$ is defined.
Let $Z, Z', X, Y$ be collections of constant vector fields valued in $\mathbb{C}^{p'+q'}$ as in Section~\ref{adapted},  forming a $S_{p',q',r'}$-frame adapted to $f(M)$ at a fixed reference point in $f(M)$. Let
\begin{align}
\widetilde Z_a&
	=\lambda_a^{~b}Z_b
	+\mu_a^{~V}Z'_V+\eta_a^{~K}X_K+\zeta_a^{~b}Y_b, \Label{adapted1}\\
\widetilde Z'_U&
	=\lambda_U^{~b}Z_b+\mu_U^{~V}Z'_V+\eta_U^{~K}X_K+\zeta_U^{~b}Y_b, \Label{adapted1'}\\
\widetilde X_J&=\lambda_J^{~b}Z_b+\mu_J^{~V}Z'_V+\eta_J^{~K}X_K+\zeta_J^{~b}Y_b,\Label{adapted2}\\
\widetilde Y_a&=\hat\lambda_a^{~b}Z_b+\hat\mu_a^{~V}Z'_V+\hat\eta_a^{~K}X_K+\hat\zeta_a^{~b}Y_b\Label{adapted3}
\end{align}
be an adapted $S_{p',q',r'}$-frame along $f(M)$.
Set
\begin{equation}\Label{A}
A
:=
\begin{pmatrix}
\lambda_a^{~b} & \mu_a^{~V} & \eta_a^{~K} & \zeta_a^{~b}\\
\lambda_U^{~b} & \mu_U^{~V} & \eta_U^{~K} & \zeta_U^{~b}\\
\lambda_J^{~b} & \mu_J^{~V} & \eta_J^{~K} & \zeta_J^{~b}\\
\hat\lambda_a^{~b} & \hat\mu_a^{~V} & \hat\eta_a^{~K} & \hat\zeta_a^{~b}\\
\end{pmatrix},
\end{equation}
so that \eqref{adapted1} - \eqref{adapted3} take the form
\begin{equation}\Label{A-eq}
\begin{pmatrix}
\2Z_a\\\2Z'_U\\ \2X_J\\ \2Y_a
\end{pmatrix}
=
A
\begin{pmatrix}
Z_b\\Z'_V\\ X_K\\ Y_b
\end{pmatrix}
.
\end{equation}
Since $Z, Z', X, Y$ form an adapted frame at a reference point of $f(M)$, we may assume that
\begin{equation}\Label{initial}
A=I_{p'+q'}
\end{equation}
at the reference point.
Since $Z, Z', X, Y$ are constant vector fields, i.e., $dZ=dZ'=dX=dY=0$,
differentiating \eqref{A-eq} and using \eqref{pi}
for $\2Z, \2Z', \2X, \2Y$
we obtain
\begin{equation}\Label{dA}
dA=\Pi A,
\end{equation}
where $\Pi$ is the connection matrix of $S_{p',q',r'}$, i.e.\ we have
\begin{equation}\Label{dA-expand}
dA=
\begin{pmatrix}
\Psi_{a}^{~b} & \Theta_a^{~V} & \Theta_{a}^{~K} & \Phi_{a}^{~b}\\
\Sigma_U^{~b} & \Omega_U^{~V} & \Delta_U^{~K} & \Theta_U^{~b}\\
\Sigma_{J}^{~b} & \Omega_J^{~V} & \Omega_{J}^{~K} & \Theta_{J}^{~b}\\
\Xi_{a}^{~b} & \Sigma_a^{~V} & \Sigma_{a}^{~K} & \3\Psi_{a}^{~b}\\
\end{pmatrix}
A.
\end{equation}

Next, it follows from Proposition~\ref{eds-2} that
\begin{equation}\Label{dZa}
d\2Z_a= \sum_{b>r}\Psi_{a}^{~b} \2Z_{b}, \quad a>r,
\end{equation}
in particular,
the span of $\2Z_{a}$, $a>r$,
is independent of the point in $f(M)$. Therefore
together with \eqref{adapted1} and \eqref{initial}, we conclude
\begin{equation}\Label{Z_a-determined}
\mu_a^{~V}=\eta_a^{~K}=\zeta_a^{~b}=0, \quad a>r.
\end{equation}
Then using again Proposition~\ref{eds-2},
\begin{align}
d\2Z_U &=\sum_{b>r}\Sigma_U^{~b}Z_b+\sum_{V>q-r}\Omega_U^{~V}Z_V+\sum_{K>n}\Delta_U^{~K}X_K,\quad U>q-r,\Label{dZU}\\
d\2 X_J &=\sum_{b>r}\Sigma_J^{~b}Z_b+\sum_{V>q-r}
		\Delta_J^{~V}Z_V+\sum_{K>n}\Omega_J^{~K}X_K,\quad J>n, \Label{dXJ},
\end{align}
which together with \eqref{dZa} imply that the span of $\2 Z_a,\2Z'_U,\2X_J$ is the same as the span of
$Z_a,~Z'_U,X_J$ where $a>r, ~U>q-r,~J>n$.
Let
$$V_1:
=\sp\{Z_a : a>r\}
=\sp\{\2Z_a : a>r\}
.$$
Consider hermitian form $\langle\cdot,\cdot\rangle$ as in \eqref{form}.
By definition of adapted frame, $\langle\cdot,\cdot\rangle$
 restricted to $V_1$ is null.
Choose $V_2$ transversal to $V_1$ such that
$$V_2
=\sp\{Z'_U, X_J : U>q-r, ~J>n\}
%=\sp\{\2Z_a, \2Z'_U, \2X_J : a>r, ~U>q-r, ~J>n\}
.$$
Then $V_1$ is the kernel of $\langle\cdot,\cdot\rangle|_{V_1\oplus V_2}$ and 
 $\langle\cdot,\cdot\rangle$ restricted to $V_2$ is nondegenerate with $q'-r'-(q-r)$ negative and $n'-n$ positive eigenvalues.
Furthermore, \eqref{dA-expand} implies
\begin{equation}
	\begin{pmatrix}
d\mu_{a}^{~V} \\ d \mu_U^{~V} \\ d\mu_{J}^{~V} \\ d\hat\mu_{a}^{~V}
\end{pmatrix}
=
	\begin{pmatrix}
\Psi_{a}^{~b} & \Theta_a^{~W} & \Theta_{a}^{~L} & \Phi_{a}^{~b}\\
\Sigma_U^{~b} & \Omega_U^{~W} & \Delta_U^{~L} & \Theta_U^{~b}\\
\Sigma_{J}^{~b} & \Omega_J^{~W} & \Omega_{J}^{~L} & \Theta_{J}^{~b}\\
\Xi_{a}^{~b} & \Sigma_a^{~W} & \Sigma_{a}^{~L} & \3\Psi_{a}^{~b}\\
\end{pmatrix}
	\begin{pmatrix}
\mu_{b}^{~V} \\ 
\mu_W^{~V} \\ 
\mu_{L}^{~V} \\ 
\hat\mu_{b}^{~V}
\end{pmatrix},
\end{equation}

\begin{equation}
	\begin{pmatrix}
d\eta_{a}^{~K} \\ d\eta_U^{~K} \\ d\eta_{J}^{~K} \\ d\hat\eta_{a}^{~K}
\end{pmatrix}
=
	\begin{pmatrix}
\Psi_{a}^{~b} & \Theta_a^{~W} & \Theta_{a}^{~L} & \Phi_{a}^{~b}\\
\Sigma_U^{~b} & \Omega_U^{~W} & \Delta_U^{~L} & \Theta_U^{~b}\\
\Sigma_{J}^{~b} & \Omega_J^{~W} & \Omega_{J}^{~L} & \Theta_{J}^{~b}\\
\Xi_{a}^{~b} & \Sigma_a^{~W} & \Sigma_{a}^{~L} & \3\Psi_{a}^{~b}\\
\end{pmatrix}
	\begin{pmatrix}
\eta_{b}^{~K} \\ \eta_W^{~K} \\ \eta_{L}^{~K} \\ \hat\eta_{b}^{~K}
\end{pmatrix}
\end{equation}
and
\begin{equation}
	\begin{pmatrix}
d\zeta_{a}^{~c} \\ 
d\zeta_U^{~c} \\ d\zeta_{J}^{~c} \\ d\hat\zeta_{a}^{~c}
\end{pmatrix}
=
	\begin{pmatrix}
\Psi_{a}^{~b} & \Theta_a^{~W} & \Theta_{a}^{~L} & \Phi_{a}^{~b}\\
\Sigma_U^{~b} & \Omega_U^{~W} & \Delta_U^{~L} & \Theta_U^{~b}\\
\Sigma_{J}^{~b} & \Omega_J^{~W} & \Omega_{J}^{~L} & \Theta_{J}^{~b}\\
\Xi_{a}^{~b} & \Sigma_a^{~W} & \Sigma_{a}^{~L} & \3\Psi_{a}^{~b}\\
\end{pmatrix}
	\begin{pmatrix}
\zeta_{b}^{~c} \\ \zeta_W^{~c} \\ \zeta_{L}^{~c} \\ \hat\zeta_{b}^{~c}
\end{pmatrix}.
\end{equation}
Now restricting to $a=\a\le r$, $U=u \le q-r$ and $J=j\le n$,
$V>q-r$, $K>n$, $c>r$,
and using \eqref{Z_a-determined} and Proposition~\ref{eds-2} taken into account,  we obtain
\begin{equation}
	\begin{pmatrix}
d\mu_{\a}^{~V} \\ d \mu_u^{~V} \\ d\mu_{j}^{~V} \\ d\hat\mu_{\a}^{~V}
\end{pmatrix}
=
	\begin{pmatrix}
\Psi_{\a}^{~\b} & \Theta_\a^{~w} & \Theta_{\a}^{~l} & \Phi_{\a}^{~\b}\\
\Sigma_u^{~\b} & \Omega_u^{~w} & \Delta_u^{~l} & \Theta_u^{~\b}\\
\Sigma_{j}^{~\b} & \Omega_j^{~w} & \Omega_{j}^{~l} & \Theta_{j}^{~\b}\\
\Xi_{\a}^{~\b} & \Sigma_\a^{~w} & \Sigma_{\a}^{~l} & \3\Psi_{\a}^{~\b}\\
\end{pmatrix}
	\begin{pmatrix}
\mu_{\b}^{~V} \\ 
\mu_w^{~V} \\ 
\mu_{k}^{~V} \\ 
\hat\mu_{\b}^{~V}
\end{pmatrix},
\end{equation}

\begin{equation}
	\begin{pmatrix}
d\eta_{\a}^{~K} \\ d\eta_u^{~K} \\ d\eta_{j}^{~K} \\ d\hat\eta_{\a}^{~K}
\end{pmatrix}
=
	\begin{pmatrix}
\Psi_{\a}^{~\b} & \Theta_\a^{~w} & \Theta_{\a}^{~l} & \Phi_{\a}^{~\b}\\
\Sigma_u^{~\b} & \Omega_u^{~w} & \Delta_u^{~l} & \Theta_u^{~\b}\\
\Sigma_{j}^{~\b} & \Omega_j^{~w} & \Omega_{j}^{~l} & \Theta_{j}^{~\b}\\
\Xi_{\a}^{~\b} & \Sigma_\a^{~w} & \Sigma_{\a}^{~l} & \3\Psi_{\a}^{~\b}\\
\end{pmatrix}
	\begin{pmatrix}
\eta_{\b}^{~K} \\ \eta_w^{~K} \\ \eta_{l}^{~K} \\ \hat\eta_{\b}^{~K}
\end{pmatrix}
\end{equation}
and
\begin{equation}
	\begin{pmatrix}
d\zeta_{\a}^{~c} \\ 
d\zeta_u^{~c} \\ d\zeta_{j}^{~c} \\ d\hat\zeta_{\a}^{~c}
\end{pmatrix}
=
	\begin{pmatrix}
\Psi_{\a}^{~\b} & \Theta_\a^{~w} & \Theta_{\a}^{~l} & \Phi_{\a}^{~\b}\\
\Sigma_u^{~\b} & \Omega_u^{~w} & \Delta_u^{~l} & \Theta_u^{~\b}\\
\Sigma_{j}^{~\b} & \Omega_j^{~w} & \Omega_{j}^{~l} & \Theta_{j}^{~\b}\\
\Xi_{\a}^{~\b} & \Sigma_\a^{~w} & \Sigma_{\a}^{~l} & \3\Psi_{\a}^{~\b}\\
\end{pmatrix}
	\begin{pmatrix}
\zeta_{\b}^{~c} \\ \zeta_w^{~c} \\ \zeta_{l}^{~c} \\ \hat\zeta_{\b}^{~c}
\end{pmatrix}.
\end{equation}
Thus each of the vector valued functions $\mu^V=(\mu_\a^{~V}, \mu_u^{~V}, \mu_j^{~V}, \hat\mu_\b^{~V})$,
$\eta^K:=(\eta_\alpha^{~K}, \eta_u^{~K}, \eta_j^{~K}, \hat\eta_\b^{~K})$ and
$\zeta^c:=(\zeta_\alpha^{~c}, \zeta_u^{~c}, \zeta_j^{~c}, \hat\zeta_\b^{~c})$ for
fixed $V>q-r$, $K>n$ and $c>r$
satisfies a complete system of first order  linear differential equations.
Then by the initial condition \eqref{initial} and the uniqueness of solutions, we conclude, in particular, that
\begin{equation}
\mu^{V}=\eta^{K}=\zeta^{c}=0,
\quad V>q-r,~ K>n, ~ c>r.
\end{equation}
Hence \eqref{adapted1}, \eqref{adapted1'} imply
\begin{align}
\widetilde Z_\alpha&=\lambda_\alpha^{~b}Z_b+\mu_\a^{~v}Z'_v+\eta_\alpha^{~k}X_k+\zeta_\alpha^{~\beta}Y_\beta,\Label{Z-al1}\\
\widetilde Z'_u&=\lambda_u^{~b}Z_b+\mu_u^{~v}Z'_v+\eta_u^{~k}X_k+\zeta_u^{~\beta}Y_\beta.\Label{Z-al2}
\end{align}

Now setting
\begin{align}
\3Z_{\a}:&= \2Z_{\a} - \sum_{b>r} \l_{\a}^{~b} Z_{b},\\
\3Z'_u:&=\2Z'_u-\sum_{b>r} \l_{u}^{~b} Z_{b},\\
\3Z'_U:&=\2Z'_U-\sum_{b>r} \l_{U}^{~b} Z_{b},
\end{align}
we still have
\begin{equation}
\sp \{ \3Z_{\a},  \2Z_{r+1},\ldots, \2Z_{r'}, \3Z'_U\} =  
	\sp \{ \2Z_{a}, \2Z'_U \},
\end{equation}
whereas \eqref{Z-al1}, \eqref{Z-al2} and \eqref{adapted1'} become
\begin{align}
\3Z_\alpha &=\lambda_\alpha^{~\beta}Z_\beta+\mu_\a^{~v}Z'_v+\eta_\alpha^{~k}X_k+\zeta_\alpha^{~\beta}Y_\beta,\\
\3Z'_u &=\lambda_u^{~\beta}Z_\beta+\mu_u^{~v}Z'_v+\eta_u^{~k}X_k+\zeta_u^{~\beta}Y_\beta\\
\3Z'_U &=\lambda_U^{~\beta}Z_\beta+\mu_U^{~V}Z'_V+\eta_U^{~K}X_K+\zeta_U^{~b}Y_b, \quad U>q-r,
\end{align}
implying
$$\sp \{\3Z_{\a}, \3Z'_u\} \subset 
	\sp  \{ Z_\a, Z'_u,X_k, Y_\b\}=:V_0,$$
and since $\2Z'_U$ is in the span of $Z_a, Z'_V, X_J $ with
$a>r, ~V>q-r, ~ J>n$,
$$
\3Z'_U = \sum_{V>q-r} \mu_U^{~V}Z'_V
	+ \sum_{K>n}\eta_U^{~K}X_K, \quad U>q-r,
$$
implying
$$\sp \{ \3Z'_U\} \subset 
	\sp  \{ Z'_U,X_J : U>q-r, J>n\}=V_2.$$
Then together with
\eqref{dZa} we conclude that for $x\in M$,
\begin{align*}
f(x)=\sp\{ \2Z_{a}, \2Z'_U\}
=&~\sp\{\3Z_{\a}, \3Z'_u\} \oplus
	\sp\{\2Z_{r+1},\ldots,\2Z_{r'}\}
 	\oplus \sp\{\3Z'_{q-r+1},\ldots, \3Z'_{q'-r'} \} 
	\\
=&~\sp\{\3Z_{\a}, \3Z_u\} \oplus
	\sp\{Z_{r+1},\ldots,Z_{r'}\}
 	\oplus \sp\{\3Z'_{q-r+1},\ldots, \3Z'_{q'-r'} \} \\
%=&~V_1\oplus S_2\oplus S_3
\in& ~Gr(V_0,q) \oplus V_1 \oplus  Gr(V_2, (q'-r')-(q-r)).
\end{align*}
\end{proof}

\section{Classification of CR maps between boundary components}\Label{proofs}
\bpf[Proof of Theorem~\ref{main}]
Let $V_0$ be given by Proposition~\ref{flatness}.
After a linear change of coordinates in $\C^{p'+q'}$
preserving the basic form (that corresponds to an automorphism of $D_{p',q'}$),
we may assume that 
$V_0=\C^{p+q}\times\{0\}$
and hence the $Gr(V_0,q)$-component of $f$ in \eqref{6.1}
defines a local CR diffeomorphism of $S_{p,q,r}$.
Then by a theorem of Kaup-Zaitsev \cite[Theorem~4.5]{KZ06},
the $Gr(V_0,q)$-component of $f$ is a restriction of a global CR-automorphism of $S_{p,q,r}$.
Furthermore, by \cite[Theorem~8.5]{KZ00},
the $Gr(V_0,q)$-component of $f$ extends to a biholomorphic automorphism
of the bounded symmetric domain $D_{p,q}$.
Hence, composing $f$ with a suitable automorphism of $D_{p',q'}$
we can put $f$ in the form \eqref{f}.
Since $f(x)\in S_{p',q',r'}$, it follows from the description of $S_{p',q',r'}$
that in the notation of \eqref{f} we must have \eqref{h-cond}.
Vice versa, any $f$ of the form \eqref{f} with $h$ satisfying \eqref{h-cond}
defines a CR map between pieces of $S_{p,q,r}$ and $S_{p',q',r'}$
satisfying the assumptions of Theorem~\ref{main}. The proof is complete.
\epf

\bpf[Proof of Theorem~\ref{cor0}]
Let $f$ be as in the corollary.
Consider the restriction $\2f$ of $f$ to the hypersurface boundary component $S_{p,q,1}$.
Then $\2f$ restricts to a CR map between open pieces
of $S_{p,q,1}$ and $S_{p',q',r'}$ for some $1\le r'\le q'$.
Since $S_{p,q,1}$ is a real hypersurface,
the transversality assumption 
$df(\xi)\in T'\setminus T'^c$ for $\xi \in T\setminus T^c$
of Theorem~\ref{main}
is satisfied.
Indeed, otherwise we would have $df(T)\subset T'^c$
and the Levi form identity \eqref{levi-id} would imply
that $df(T)$ is contained in the Levi null-space of $S_{p',q',r'}$,
which, in view of positivity, coincides with the kernel.
The latter is an integrable distribution whose orbits
are complex submanifolds of $S_{p',q',r'}$.
Then $f$ would send any curve in $S_{p,q,1}$ into 
one of these complex submanifolds 
(see \cite{BER99} for details).
Hence
it would follow that $f$ sends an open piece
of $S_{p,q,1}$ into a complex submanifold of $S_{p',q',r'}$,
which would contradict the assumptions of corollary.

Next, since $r'\ge r=1$, the assumptions \eqref{cor-ineq}
imply \eqref{main-ineq}.
Now by Theorem~\ref{main}, we can assume
that $f$ is of the form \eqref{f}.
Furthermore, the assumption $f(U\cap D_{p,q})\not\subset \d D_{p',q'}$
implies that the block $I_{r'-r}$ in \eqref{f} must be trivial, and hence
$f$ is of the desired form.
\epf

%\section{Proof of Corollary~\ref{cor}}

\bigskip
{\bf Acknowledgement.}
The authors are grateful to Wilhelm Kaup for careful reading and helpful remarks.

\end{document}